\numberwithin{equation}{section}
\numberwithin{figure}{section}
\theoremstyle{plain}
\newtheorem{thm}{\protect\theoremname}[section]
  \theoremstyle{remark}
  \newtheorem{rem}[thm]{\protect\remarkname}
  \theoremstyle{plain}
  \newtheorem{lem}[thm]{\protect\lemmaname}
  \theoremstyle{plain}
  \newtheorem{conjecture}[thm]{\protect\conjecturename}
  \theoremstyle{plain}
  \newtheorem{prop}[thm]{\protect\propositionname}
  \theoremstyle{definition}
  \newtheorem{example}[thm]{\protect\examplename}
\DeclareMathOperator{\Add}{\textup{Add}}
\DeclareMathOperator{\Aff}{\textup{Aff}}
\DeclareMathOperator{\Alg}{\textup{Alg}}
\DeclareMathOperator{\Ann}{\textup{Ann}}
\DeclareMathOperator{\Arr}{\textup{Arr}}
\DeclareMathOperator{\Art}{\textup{Art}}
\DeclareMathOperator{\Ass}{\textup{Ass}}
\DeclareMathOperator{\Aut}{\textup{Aut}}
\DeclareMathOperator{\Autsh}{\underline{\textup{Aut}}}
\DeclareMathOperator{\Bi}{\textup{B}}
\DeclareMathOperator{\CAdd}{\textup{CAdd}}
\DeclareMathOperator{\CAlg}{\textup{CAlg}}
\DeclareMathOperator{\CMon}{\textup{CMon}}
\DeclareMathOperator{\CPMon}{\textup{CPMon}}
\DeclareMathOperator{\CRings}{\textup{CRings}}
\DeclareMathOperator{\CSMon}{\textup{CSMon}}
\DeclareMathOperator{\CaCl}{\textup{CaCl}}
\DeclareMathOperator{\Cart}{\textup{Cart}}
\DeclareMathOperator{\Cl}{\textup{Cl}}
\DeclareMathOperator{\Coh}{\textup{Coh}}
\DeclareMathOperator{\Coker}{\textup{Coker}}
\DeclareMathOperator{\Cov}{\textup{Cov}}
\DeclareMathOperator{\Der}{\textup{Der}}
\DeclareMathOperator{\Div}{\textup{Div}}
\DeclareMathOperator{\End}{\textup{End}}
\DeclareMathOperator{\Endsh}{\underline{\textup{End}}}
\DeclareMathOperator{\Ext}{\textup{Ext}}
\DeclareMathOperator{\Extsh}{\underline{\textup{Ext}}}
\DeclareMathOperator{\FAdd}{\textup{FAdd}}
\DeclareMathOperator{\FCoh}{\textup{FCoh}}
\DeclareMathOperator{\FGrad}{\textup{FGrad}}
\DeclareMathOperator{\FLoc}{\textup{FLoc}}
\DeclareMathOperator{\FMod}{\textup{FMod}}
\DeclareMathOperator{\FPMon}{\textup{FPMon}}
\DeclareMathOperator{\FRep}{\textup{FRep}}
\DeclareMathOperator{\FSMon}{\textup{FSMon}}
\DeclareMathOperator{\FVect}{\textup{FVect}}
\DeclareMathOperator{\Fibr}{\textup{Fibr}}
\DeclareMathOperator{\Fix}{\textup{Fix}}
\DeclareMathOperator{\Fl}{\textup{Fl}}
\DeclareMathOperator{\Fr}{\textup{Fr}}
\DeclareMathOperator{\Funct}{\textup{Funct}}
\DeclareMathOperator{\GAlg}{\textup{GAlg}}
\DeclareMathOperator{\GExt}{\textup{GExt}}
\DeclareMathOperator{\GHom}{\textup{GHom}}
\DeclareMathOperator{\GL}{\textup{GL}}
\DeclareMathOperator{\GMod}{\textup{GMod}}
\DeclareMathOperator{\GRis}{\textup{GRis}}
\DeclareMathOperator{\GRiv}{\textup{GRiv}}
\DeclareMathOperator{\Gal}{\textup{Gal}}
\DeclareMathOperator{\Gl}{\textup{Gl}}
\DeclareMathOperator{\Grad}{\textup{Grad}}
\DeclareMathOperator{\Hilb}{\textup{Hilb}}
\DeclareMathOperator{\Hl}{\textup{H}}
\DeclareMathOperator{\Hom}{\textup{Hom}}
\DeclareMathOperator{\Homsh}{\underline{\textup{Hom}}}
\DeclareMathOperator{\ISym}{\textup{Sym}^*}
\DeclareMathOperator{\Imm}{\textup{Im}}
\DeclareMathOperator{\Irr}{\textup{Irr}}
\DeclareMathOperator{\Iso}{\textup{Iso}}
\DeclareMathOperator{\Isosh}{\underline{\textup{Iso}}}
\DeclareMathOperator{\Ker}{\textup{Ker}}
\DeclareMathOperator{\LAdd}{\textup{LAdd}}
\DeclareMathOperator{\LAlg}{\textup{LAlg}}
\DeclareMathOperator{\LMon}{\textup{LMon}}
\DeclareMathOperator{\LPMon}{\textup{LPMon}}
\DeclareMathOperator{\LRings}{\textup{LRings}}
\DeclareMathOperator{\LSMon}{\textup{LSMon}}
\DeclareMathOperator{\Left}{\textup{L}}
\DeclareMathOperator{\Lex}{\textup{Lex}}
\DeclareMathOperator{\Loc}{\textup{Loc}}
\DeclareMathOperator{\M}{\textup{M}}
\DeclareMathOperator{\ML}{\textup{ML}}
\DeclareMathOperator{\MLex}{\textup{MLex}}
\DeclareMathOperator{\Map}{\textup{Map}}
\DeclareMathOperator{\Mod}{\textup{Mod}}
\DeclareMathOperator{\Mon}{\textup{Mon}}
\DeclareMathOperator{\Ob}{\textup{Ob}}
\DeclareMathOperator{\Obj}{\textup{Obj}}
\DeclareMathOperator{\PDiv}{\textup{PDiv}}
\DeclareMathOperator{\PGL}{\textup{PGL}}
\DeclareMathOperator{\PML}{\textup{PML}}
\DeclareMathOperator{\PMLex}{\textup{PMLex}}
\DeclareMathOperator{\PMon}{\textup{PMon}}
\DeclareMathOperator{\Pic}{\textup{Pic}}
\DeclareMathOperator{\Picsh}{\underline{\textup{Pic}}}
\DeclareMathOperator{\Pro}{\textup{Pro}}
\DeclareMathOperator{\Proj}{\textup{Proj}}
\DeclareMathOperator{\QAdd}{\textup{QAdd}}
\DeclareMathOperator{\QAlg}{\textup{QAlg}}
\DeclareMathOperator{\QCoh}{\textup{QCoh}}
\DeclareMathOperator{\QMon}{\textup{QMon}}
\DeclareMathOperator{\QPMon}{\textup{QPMon}}
\DeclareMathOperator{\QRings}{\textup{QRings}}
\DeclareMathOperator{\QSMon}{\textup{QSMon}}
\DeclareMathOperator{\R}{\textup{R}}
\DeclareMathOperator{\Rep}{\textup{Rep}}
\DeclareMathOperator{\Rings}{\textup{Rings}}
\DeclareMathOperator{\Riv}{\textup{Riv}}
\DeclareMathOperator{\SFibr}{\textup{SFibr}}
\DeclareMathOperator{\SMLex}{\textup{SMLex}}
\DeclareMathOperator{\SMex}{\textup{SMex}}
\DeclareMathOperator{\SMon}{\textup{SMon}}
\DeclareMathOperator{\SchI}{\textup{SchI}}
\DeclareMathOperator{\Sh}{\textup{Sh}}
\DeclareMathOperator{\Soc}{\textup{Soc}}
\DeclareMathOperator{\Spec}{\textup{Spec}}
\DeclareMathOperator{\Specsh}{\underline{\textup{Spec}}}
\DeclareMathOperator{\Stab}{\textup{Stab}}
\DeclareMathOperator{\Supp}{\textup{Supp}}
\DeclareMathOperator{\Sym}{\textup{Sym}}
\DeclareMathOperator{\TMod}{\textup{TMod}}
\DeclareMathOperator{\Top}{\textup{Top}}
\DeclareMathOperator{\Tor}{\textup{Tor}}
\DeclareMathOperator{\Vect}{\textup{Vect}}
\DeclareMathOperator{\alt}{\textup{ht}}
\DeclareMathOperator{\car}{\textup{char}}
\DeclareMathOperator{\codim}{\textup{codim}}
\DeclareMathOperator{\degtr}{\textup{degtr}}
\DeclareMathOperator{\depth}{\textup{depth}}
\DeclareMathOperator{\divis}{\textup{div}}
\DeclareMathOperator{\et}{\textup{et}}
\DeclareMathOperator{\ffpSch}{\textup{ffpSch}}
\DeclareMathOperator{\h}{\textup{h}}
\DeclareMathOperator{\ilim}{\displaystyle{\lim_{\longrightarrow}}}
\DeclareMathOperator{\ind}{\textup{ind}}
\DeclareMathOperator{\indim}{\textup{inj dim}}
\DeclareMathOperator{\lf}{\textup{LF}}
\DeclareMathOperator{\op}{\textup{op}}
\DeclareMathOperator{\ord}{\textup{ord}}
\DeclareMathOperator{\pd}{\textup{pd}}
\DeclareMathOperator{\plim}{\displaystyle{\lim_{\longleftarrow}}}
\DeclareMathOperator{\pr}{\textup{pr}}
\DeclareMathOperator{\pt}{\textup{pt}}
\DeclareMathOperator{\rk}{\textup{rk}}
\DeclareMathOperator{\tr}{\textup{tr}}
\DeclareMathOperator{\type}{\textup{r}}
\DeclareMathOperator*{\colim}{\textup{colim}}
  \providecommand{\conjecturename}{Conjecture}
  \providecommand{\examplename}{Example}
  \providecommand{\lemmaname}{Lemma}
  \providecommand{\propositionname}{Proposition}
  \providecommand{\remarkname}{Remark}
\providecommand{\theoremname}{Theorem}
\begin{document}

\title{Notes on the motivic McKay correspondence for the group scheme $\alpha_{p}$}

\author{Fabio Tonini and Takehiko Yasuda}

\address{Freie Universit\"{a}t Berlin, FB Mathematik und Informatik, Arnimallee
3, Zimmer 112A Berlin, 14195 Germany}

\email{tonini@zedat.fu-berlin.de}

\address{Department of Mathematics, Graduate School of Science, Osaka University,
Toyonaka, Osaka 560-0043, Japan}

\email{takehikoyasuda@math.sci.osaka-u.ac.jp}
\begin{abstract}
We formulate a conjecture on the motivic McKay correspondence for
the group scheme $\alpha_{p}$ in characteristic $p>0$ and give a
few evidences. The conjecture especially claims that there would be
a close relation between quotient varieties by $\alpha_{p}$ and ones
by the cyclic group of order $p$. 
\end{abstract}

\maketitle
\global\long\def\AA{\mathbb{A}}
\global\long\def\PP{\mathbb{P}}
\global\long\def\NN{\mathbb{N}}
\global\long\def\GG{\mathbb{G}}
\global\long\def\ZZ{\mathbb{Z}}
\global\long\def\QQ{\mathbb{Q}}
\global\long\def\CC{\mathbb{C}}
\global\long\def\FF{\mathbb{F}}
\global\long\def\LL{\mathbb{L}}
\global\long\def\RR{\mathbb{R}}
\global\long\def\MM{\mathbb{M}}
\global\long\def\SS{\mathbb{S}}

\global\long\def\bx{\boldsymbol{x}}
\global\long\def\by{\boldsymbol{y}}
\global\long\def\bf{\mathbf{f}}
\global\long\def\ba{\mathbf{a}}
\global\long\def\bs{\mathbf{s}}
\global\long\def\bt{\mathbf{t}}
\global\long\def\bw{\mathbf{w}}
\global\long\def\bb{\mathbf{b}}
\global\long\def\bv{\mathbf{v}}
\global\long\def\bp{\mathbf{p}}
\global\long\def\bq{\mathbf{q}}
\global\long\def\bm{\mathbf{m}}
\global\long\def\bj{\mathbf{j}}
\global\long\def\bM{\mathbf{M}}
\global\long\def\bd{\mathbf{d}}
\global\long\def\bA{\mathbf{A}}
\global\long\def\bB{\mathbf{B}}
\global\long\def\bC{\mathbf{C}}
\global\long\def\bP{\mathbf{P}}
\global\long\def\bX{\mathbf{X}}
\global\long\def\bY{\mathbf{Y}}
\global\long\def\bZ{\mathbf{Z}}
\global\long\def\bW{\mathbf{W}}
\global\long\def\bV{\mathbf{V}}
\global\long\def\bU{\mathbf{U}}
\global\long\def\bN{\mathbf{N}}

\global\long\def\cN{\mathcal{N}}
\global\long\def\cW{\mathcal{W}}
\global\long\def\cY{\mathcal{Y}}
\global\long\def\cM{\mathcal{M}}
\global\long\def\cF{\mathcal{F}}
\global\long\def\cX{\mathcal{X}}
\global\long\def\cE{\mathcal{E}}
\global\long\def\cJ{\mathcal{J}}
\global\long\def\cO{\mathcal{O}}
\global\long\def\cD{\mathcal{D}}
\global\long\def\cZ{\mathcal{Z}}
\global\long\def\cR{\mathcal{R}}
\global\long\def\cC{\mathcal{C}}
\global\long\def\cL{\mathcal{L}}
\global\long\def\cV{\mathcal{V}}
\global\long\def\cU{\mathcal{U}}
\global\long\def\cS{\mathcal{S}}
\global\long\def\cT{\mathcal{T}}
\global\long\def\cA{\mathcal{A}}
\global\long\def\cB{\mathcal{B}}
\global\long\def\cG{\mathcal{G}}
\global\long\def\cP{\mathcal{P}}

\global\long\def\fs{\mathfrak{s}}
\global\long\def\fp{\mathfrak{p}}
\global\long\def\fm{\mathfrak{m}}
\global\long\def\fX{\mathfrak{X}}
\global\long\def\fV{\mathfrak{V}}
\global\long\def\fx{\mathfrak{x}}
\global\long\def\fv{\mathfrak{v}}
\global\long\def\fY{\mathfrak{Y}}
\global\long\def\fa{\mathfrak{a}}
\global\long\def\fb{\mathfrak{b}}
\global\long\def\fc{\mathfrak{c}}
\global\long\def\fO{\mathfrak{O}}
\global\long\def\fd{\mathfrak{d}}
\global\long\def\fP{\mathfrak{P}}

\global\long\def\rv{\mathbf{\mathrm{v}}}
\global\long\def\rx{\mathrm{x}}
\global\long\def\rw{\mathrm{w}}
\global\long\def\ry{\mathrm{y}}
\global\long\def\rz{\mathrm{z}}
\global\long\def\bv{\mathbf{v}}
\global\long\def\bw{\mathbf{w}}
\global\long\def\sv{\mathsf{v}}
\global\long\def\sx{\mathsf{x}}
\global\long\def\sw{\mathsf{w}}

\global\long\def\Spec{\mathrm{Spec}\,}
\global\long\def\Hom{\mathrm{Hom}}
\global\long\def\sht{\mathrm{sht}}

\global\long\def\Var{\mathbf{Var}}
\global\long\def\Gal{\mathrm{Gal}}
\global\long\def\Jac{\mathrm{Jac}}
\global\long\def\Ker{\mathrm{Ker}}
\global\long\def\Image{\mathrm{Im}}
\global\long\def\Aut{\mathrm{Aut}}
\global\long\def\diag{\mathrm{diag}}
\global\long\def\characteristic{\mathrm{char}}
\global\long\def\tors{\mathrm{tors}}
\global\long\def\sing{\mathrm{sing}}
\global\long\def\red{\mathrm{red}}
\global\long\def\ord{\mathrm{ord}}
\global\long\def\pt{\mathrm{pt}}
\global\long\def\op{\mathrm{op}}
\global\long\def\Val{\mathrm{Val}}
\global\long\def\Res{\mathrm{Res}}
\global\long\def\Pic{\mathrm{Pic}}
\global\long\def\disc{\mathrm{disc}}
\global\long\def\Coker{\mathrm{Coker}}
 \global\long\def\length{\mathrm{length}}
\global\long\def\sm{\mathrm{sm}}
\global\long\def\rank{\mathrm{rank}}
\global\long\def\age{\mathrm{age}}
\global\long\def\et{\mathrm{et}}
\global\long\def\hom{\mathrm{hom}}
\global\long\def\tor{\mathrm{tor}}
\global\long\def\reg{\mathrm{reg}}
\global\long\def\cont{\mathrm{cont}}
\global\long\def\crep{\mathrm{crep}}
\global\long\def\Stab{\mathrm{Stab}}
\global\long\def\discrep{\mathrm{discrep}}
\global\long\def\mld{\mathrm{mld}}
\global\long\def\GCov{G\textrm{-}\mathrm{Cov}}
\global\long\def\P{\mathrm{P}}

\global\long\def\GL{\mathrm{GL}}
\global\long\def\codim{\mathrm{codim}}
\global\long\def\prim{\mathrm{prim}}
\global\long\def\cHom{\mathcal{H}om}
\global\long\def\cSpec{\mathcal{S}pec}
\global\long\def\Proj{\mathrm{Proj}\,}
\global\long\def\modified{\mathrm{mod}}
\global\long\def\ind{\mathrm{ind}}
\global\long\def\rad{\mathrm{rad}}
\global\long\def\Conj{\mathrm{Conj}}
\global\long\def\fie{\textrm{-}\mathrm{fie}}
\global\long\def\NS{\mathrm{NS}}
\global\long\def\Disc{\mathrm{Disc}}
\global\long\def\hotimes{\hat{\otimes}}
\global\long\def\Fil{\mathrm{Fil}}
\global\long\def\Inn{\mathrm{Inn}}
\global\long\def\rfil{\mathrm{rfil}}
\global\long\def\per{\mathrm{per}}
\global\long\def\id{\mathrm{id}}
\global\long\def\AffVar{\mathbf{AffVar}}
\global\long\def\uni{\mathrm{uni}}
\global\long\def\Alg{\mathbf{Alg}}
\global\long\def\PSch{\textit{P-}\mathbf{Sch}}
\global\long\def\PQVar{\textit{P-}\mathbf{QVar}}
\global\long\def\QVar{\mathbf{QVar}}

\global\long\def\A{\mathbb{A}}

\global\long\def\Ab{(\textup{Ab})}

\global\long\def\C{\mathbb{C}}

\global\long\def\Cat{(\textup{cat})}

\global\long\def\Di#1{\textup{D}(#1)}

\global\long\def\E{\mathcal{E}}

\global\long\def\F{\mathbb{F}}

\global\long\def\GCov{G\textup{-Cov}}

\global\long\def\Gcat{(\textup{Galois cat})}

\global\long\def\Gfsets#1{#1\textup{-fsets}}

\global\long\def\Gm{\mathbb{G}_{m}}

\global\long\def\GrCov#1{\textup{D}(#1)\textup{-Cov}}

\global\long\def\Grp{(\textup{Grps})}

\global\long\def\Gsets#1{(#1\textup{-sets})}

\global\long\def\HCov{H\textup{-Cov}}

\global\long\def\MCov{\textup{D}(M)\textup{-Cov}}

\global\long\def\MHilb{M\textup{-Hilb}}

\global\long\def\N{\mathbb{N}}

\global\long\def\PGor{\textup{PGor}}

\global\long\def\PGrp{(\textup{Profinite Grp})}

\global\long\def\PP{\mathbb{P}}

\global\long\def\Pj{\mathbb{P}}

\global\long\def\Q{\mathbb{Q}}

\global\long\def\RCov#1{#1\textup{-Cov}}

\global\long\def\RR{\mathbb{R}}

\global\long\def\WW{\textup{W}}

\global\long\def\Z{\mathbb{Z}}

\global\long\def\acts{\curvearrowright}

\global\long\def\alA{\mathscr{A}}

\global\long\def\alB{\mathscr{B}}

\global\long\def\arr{\longrightarrow}

\global\long\def\arrdi#1{\xlongrightarrow{#1}}

\global\long\def\catC{\mathscr{C}}

\global\long\def\catD{\mathscr{D}}

\global\long\def\catF{\mathscr{F}}

\global\long\def\catG{\mathscr{G}}

\global\long\def\comma{,\ }

\global\long\def\covU{\mathcal{U}}

\global\long\def\covV{\mathcal{V}}

\global\long\def\covW{\mathcal{W}}

\global\long\def\duale#1{{#1}^{\vee}}

\global\long\def\fasc#1{\widetilde{#1}}

\global\long\def\fsets{(\textup{f-sets})}

\global\long\def\iL{r\mathscr{L}}

\global\long\def\id{\textup{id}}

\global\long\def\la{\langle}

\global\long\def\odi#1{\mathcal{O}_{#1}}

\global\long\def\ra{\rangle}

\global\long\def\rig{\mathbin{\!\!\pmb{\fatslash}}}

\global\long\def\set{(\textup{Sets})}

\global\long\def\shA{\mathcal{A}}

\global\long\def\shB{\mathcal{B}}

\global\long\def\shC{\mathcal{C}}

\global\long\def\shD{\mathcal{D}}

\global\long\def\shE{\mathcal{E}}

\global\long\def\shF{\mathcal{F}}

\global\long\def\shG{\mathcal{G}}

\global\long\def\shH{\mathcal{H}}

\global\long\def\shI{\mathcal{I}}

\global\long\def\shJ{\mathcal{J}}

\global\long\def\shK{\mathcal{K}}

\global\long\def\shL{\mathcal{L}}

\global\long\def\shM{\mathcal{M}}

\global\long\def\shN{\mathcal{N}}

\global\long\def\shO{\mathcal{O}}

\global\long\def\shP{\mathcal{P}}

\global\long\def\shQ{\mathcal{Q}}

\global\long\def\shR{\mathcal{R}}

\global\long\def\shS{\mathcal{S}}

\global\long\def\shT{\mathcal{T}}

\global\long\def\shU{\mathcal{U}}

\global\long\def\shV{\mathcal{V}}

\global\long\def\shW{\mathcal{W}}

\global\long\def\shX{\mathcal{X}}

\global\long\def\shY{\mathcal{Y}}

\global\long\def\shZ{\mathcal{Z}}

\global\long\def\st{\ | \ }

\global\long\def\stA{\mathcal{A}}

\global\long\def\stB{\mathcal{B}}

\global\long\def\stC{\mathcal{C}}

\global\long\def\stD{\mathcal{D}}

\global\long\def\stE{\mathcal{E}}

\global\long\def\stF{\mathcal{F}}

\global\long\def\stG{\mathcal{G}}

\global\long\def\stH{\mathcal{H}}

\global\long\def\stI{\mathcal{I}}

\global\long\def\stJ{\mathcal{J}}

\global\long\def\stK{\mathcal{K}}

\global\long\def\stL{\mathcal{L}}

\global\long\def\stM{\mathcal{M}}

\global\long\def\stN{\mathcal{N}}

\global\long\def\stO{\mathcal{O}}

\global\long\def\stP{\mathcal{P}}

\global\long\def\stQ{\mathcal{Q}}

\global\long\def\stR{\mathcal{R}}

\global\long\def\stS{\mathcal{S}}

\global\long\def\stT{\mathcal{T}}

\global\long\def\stU{\mathcal{U}}

\global\long\def\stV{\mathcal{V}}

\global\long\def\stW{\mathcal{W}}

\global\long\def\stX{\mathcal{X}}

\global\long\def\stY{\mathcal{Y}}

\global\long\def\stZ{\mathcal{Z}}

\global\long\def\then{\ \Longrightarrow\ }

\global\long\def\L{\textup{L}}

\global\long\def\l{\textup{l}}

\makeatletter 
\providecommand\@dotsep{5} 
\makeatother 

\global\long\def\st{\mathrm{st}}

\section{Introduction}

The motivic McKay correspondence was established by Batyrev \cite{MR1677693}
and Denef\textendash Loeser \cite{MR1905024} in characteristic zero.
A version of this theory says that given a linear action of a finite
group $G$ on an affine space $\AA_{k}^{d}$ without pseudo-reflection,
we can express the motivic stringy invariant $M_{\st}(\AA_{k}^{d}/G)$
of the quotient variety $\AA_{k}^{d}/G$ as a sum of the form $\sum_{g\in\Conj(G)}\LL^{a(g)}$,
where $\Conj(G)$ is the set of conjugacy classes of $G$ and $a$
is a function on $\Conj(G)$ with values in $\frac{1}{\sharp G}\ZZ$.
This can be generalized to the tame case in characteristic $p>0$
(the case $p\nmid\sharp G$) without essential change (see \cite{MR2271984}).
After studying the case of the cyclic group of order $p$, the second
author formulated a conjectural generalization to the wild case (the
case $p\mid\sharp G$) (see \cite{MR3230848,wild-p-adic}). In this
conjecture, the sum $\sum_{g\in\Conj(G)}\LL^{a(g)}$ is replaced with
an integral of the form $\int_{\Delta_{G}}\LL^{a(g)}$, where $\Delta_{G}$
is the moduli space of $G$-torsors over the punctured formal disk
$\Spec k((t))$ and $a$ is a $\frac{1}{\sharp G}\ZZ$-valued function
on it. 

The aim of this paper is to discuss the case where $G$ is the group
scheme $\alpha_{p}$ rather than a genuine finite group, as a first
step towards further generalization to arbitrary finite group schemes.
We will formulate a conjectural expression (Conjecture \ref{conj:main})
for $M_{\st}(\AA_{k}^{d}/G)$ again of the form $\int_{\Delta_{G}}\LL^{a(g)}$
under the condition $D_{\bd}\ge2$ (for the definition of $D_{\bd}$,
see Section \ref{sec:conjecture}). But here we have a new phenomenon:
the moduli space $\Delta_{G}$ in this case is an ind-pro-limit of
finite dimensional spaces rather than an ind-limit as in the case
of genuine finite groups. Therefore we need to define a motivic measure
on $\Delta_{G}$ in terms of truncation maps as we do for the arc
space. Our conjecture also indicates a close relation between the
case of $G=\alpha_{p}$ and the case of the cyclic group $H:=\ZZ/p\ZZ$
of order $p$. There exists a one-to-one correspondence between $G$-representations
and $H$-representations. The conjecture says that if $\AA_{k}^{d}/G$
and $\AA_{k}^{d}/H$ are quotient varieties associated to $G$ and
$H$-representations corresponding to each other, then they have equal
motivic stringy invariant. We will give a few evidences for this conjecture.
Note that Hiroyuki Ito \cite{Ito-talk} earlier observed a similarity
between surface quotient singularities by (non-linear) $G$-actions
and $H$-actions. This was an inspiration for our conjecture. 

The paper is organized as follows. In Section \ref{sec:Moduli}, we
describe the moduli spaces $\Delta_{G}$ and $\Delta_{H}$. In Section
\ref{sec:Motivic-measures}, we define motivic measures on these spaces.
In Section \ref{sec:conjecture}, we formulate our main conjecture.
In Section \ref{sec:Two-Examples}, we give two examples supporting
the equality of stringy invariants of $\AA_{k}^{d}/G$ and $\AA_{k}^{d}/H$.
In Section \ref{sec:two-dim}, we discuss the case where $G$ acts
on $\AA_{k}^{2}$, that is, the case $D_{\bd}=1$, as a toy model
and show the change of variables formula for the quotient map $\AA_{k}^{2}\to\AA_{k}^{2}/G$.
This would be viewed as a supporting evidence for our conjecture that
$M_{\st}(\AA_{k}^{d}/G)$ is expressed as $\int_{\Delta_{G}}\LL^{a(g)}$
when $D_{\bd}\ge2$. In Appendix, we briefly recall the representation
theory of $\alpha_{p}$ in terms of relations to nilpotent endomorphisms
and derivations.

In what follows, we work over an algebraically closed field $k$ of
characteristic $p>0$. We always denote by $G$ the group scheme $\alpha_{p}$
and by $H$ the cyclic group of order $p$. We write the coordinate
ring of $\alpha_{p}$ as $k[\epsilon]=k[x]/(x^{p})$ with $\epsilon$
the image of $x$ in this quotient ring. 

The authors would like to thank Hiroyuki Ito for helpful discussion.
The second author was supported by JSPS KAKENHI Grant Numbers JP15K17510
and JP16H06337. 

\section{\label{sec:Moduli}Moduli of $G$ and $H$-torsors over the punctured
formal disk.}

The group scheme $G$ fits into the exact sequence
\[
0\to G\to\GG_{a}\xrightarrow{F}\GG_{a}\to0,
\]
where $F$ is the Frobenius map. Therefore the $G$-torsors over $\Spec k((t))$
are parameterized by $k((t))/F(k((t)))$. Let 
\[
\Delta_{G}:=\left\{ \sum_{i\in\ZZ;\,p\nmid i}c_{i}t^{i}\mid c_{i}\in k\right\} \subset k((t)),
\]
the set of Laurent power series having only terms of degree prime
to $p$. The inclusion map $\Delta_{G}\to k((t))$ induces a bijection
$\Delta_{G}\to k((t))/F(k((t)))$. Thus we regard $\Delta_{G}$ as
the ``moduli space'' of $G$-torsors over $\Spec k((t))$. The $G$-torsor
corresponding to a Laurent power series $f\in\Delta_{G}$ is $\Spec k((t))[z]/(z^{p}-f)$
and the action of $\alpha_{p}$ is defined so that the associated
coaction is the $k((t))$-algebra homomorphism
\[
\frac{k((t))[z]}{(z^{p}-f)}\to\frac{k((t))[z]}{(z^{p}-f)}[\epsilon],\,z\mapsto z+\epsilon.
\]

We can make a similar construction for $H$-torsors. Let $\wp\colon k((t))\to k((t))$
be the Artin-Schreier map given by $\wp(f)=f^{p}-f$ and let 
\[
\Delta_{H}:=\left\{ \sum_{i\in\ZZ;\,i<0,\,p\nmid i}c_{i}t^{i}\mid c_{i}\in k\right\} \subset k((t)),
\]
the set of Laurent polynomials having only terms of \emph{negative
}degree prime to $p$. Then the $H$-torsors are parameterized by
$k((t))/\wp(k((t)))$ and the composite map $\Delta_{H}\hookrightarrow k((t))\twoheadrightarrow k((t))/\wp(k((t)))$
is bijective. Thus we regard $\Delta_{H}$ as the ``moduli space''
of $H$-torsors over $\Spec k((t))$. The $H$-torsor corresponding
to $f\in\Delta_{H}$ is $\Spec k((t))[z]/(z^{p}-z-f)$ where a generator
of $H$ acts by $z\mapsto z+1$. 
\begin{rem}
Constructing the true moduli space (stack) which represents a relevant
moduli functor is a more difficult problem. However, the above ad
hoc version would be sufficient for our application to motivic integration,
since we work over an algebraically closed field. When $k$ is algebraically
closed, the coarse moduli space for $\Delta_{H}$ was constructed
by Harbater \cite{MR579791}. The fine moduli stack for $\Delta_{H}$
over an arbitrary field of characteristic $p>0$ was constructed by
the authors \cite{Tonini:2017qr}. The moduli space $\Delta_{G}$
has not been seriously studied yet, as far as the authors know.
\end{rem}

\section{Motivic measures on $\Delta_{G}$ and $\Delta_{H}$\label{sec:Motivic-measures}}

For a positive integer $j$ prime to $p$, the set $\Delta_{H}^{\ge-j}:=\{f\in\Delta_{H}\mid\ord(f)\ge-j\}$
is the affine space $\A_{k}^{j-\lfloor j/p\rfloor}$. Thus $\Delta_{H}$
is the union of affine spaces $\Delta_{H}^{\ge-j}$. We say that a
subset $C$ of $\Delta_{H}$ is \emph{constructible }if $C$ is a
constructible subset of some $\Delta_{H}^{\ge-j}$. We define the
\emph{motivic measure} $\mu_{H}$ on $\Delta_{H}$ by $\mu_{H}(C):=[C]$
say in $\hat{\cM}'$, a version of the complete Grothendieck ring
of varieties used in \cite{MR3230848}. (In this note, we do not discuss
what additional relation would be really necessary to put on the complete
Grothendieck ring for the McKay correspondence in the case of the
group scheme $G$. This should be clarified in a future study.)

For $n\in\ZZ$, let 
\[
\tau_{n}\colon\Delta_{G}\to\Delta_{G,n}:=\frac{\Delta_{G}}{\Delta_{G}\cap t^{np}k[[t]]}
\]
be the quotient map, which truncates the terms of degrees $\ge np$.
We often identify $\Delta_{G,0}$ with $\Delta_{H}$ through the natural
bijection $\Delta_{H}\hookrightarrow\Delta_{G}\to\Delta_{G,0}$ and
$\tau_{0}$ with a map $\Delta_{G}\to\Delta_{H}$. When $n\ge0$,
$\Delta_{G,n}$ is the product of $\Delta_{H}$ and an affine space
$\AA_{k}^{n(p-1)}$. We say that a subset $C\subset\Delta_{G}$ is
a \emph{cylinder of level $n$ }if $\tau_{n}(C)$ is a constructible
subset of $\Delta_{G,n}$ and $C=\tau_{n}^{-1}(\tau_{n}(C))$. For
a cylinder $C\subset\Delta_{G}$ of level $n$, we define its measure
as 
\[
\mu_{G}(C):=[\tau_{n}(C)]\LL^{-n(p-1)}\in\hat{\cM}'.
\]
Since the natural map $\Delta_{G,n+1}\to\Delta_{G,n}$ is the trivial
$\A_{k}^{p-1}$-fibration, the element $[\tau_{n}(C)]\LL^{-n(p-1)}$
is independent of the choice of a sufficiently large $n$. For instance,
$\Delta_{G}^{\ge0}:=\{f\in\Delta_{G}\mid\ord(f)\}$ is a cylinder
of level zero such that $\tau_{0}(\Delta_{G}^{\ge0})$ is a singleton.
Note that this set contains $0\in k((t))$, according to the convention
$\ord(0)=+\infty$. The measure of $\Delta_{G}^{\ge0}$ is
\[
\mu_{G}(\Delta_{G}^{\ge0})=[1\,\pt]=1.
\]

\section{A conjecture on the McKay correspondence for linear $G$-actions\label{sec:conjecture}}

To a sequence of integers, $\bd=(d_{1},d_{2},\dots,d_{l})$ such that
$1\le d_{\lambda}\le p$ and $d_{\lambda}\ge d_{\lambda+1}$, we associate
the linear representation $W$ of $H$ over $k$ that have $d_{\lambda}$-dimensional
indecomposables as direct summands. Namely a generator of $H$ acts
on the vector space $W=k^{d}$, $d:=|\bd|=\sum_{\lambda=1}^{l}d_{\lambda}$
by a matrix whose Jordan normal form has Jordan blocks of sizes $d_{\lambda}$
with diagonal entries 1. The map $\bd\mapsto W$ gives a one-to-one
correspondence between sequences $\bd$ of integers as above and isomorphism
classes of $H$-representions over $k$. 

Similarly, to a sequence $\bd$ as above, we associate also the linear
representation $V$ of $G$ over $k$ that have $d_{\lambda}$-dimensional
indecomposables as direct summands. If $\rho$ denotes the nilpotent
linear endomorphism of $k[x_{1},\dots,x_{d}]$ defined by Jordan blocks
of sizes $d_{\lambda}$ with diagonal entries 0, then the map 
\[
k[x_{1},\dots,x_{d}]\to k[x_{1},\dots,x_{d}][\epsilon],\,f\mapsto\sum_{i=0}^{p-1}\frac{\rho^{i}(f)}{i!}\epsilon^{i}
\]
defines the linear $G$-action on $\AA_{k}^{d}$. For details, see
Appendix. In particular, through sequences $\bd$, we get a one-to-one
correspondence between $G$-representations and $H$-representations.
We expect that the quotient varieties $V/G$ and $W/H$ for the same
$\bd$ would be very similar in the sense we will make more precise. 

Let us fix a sequence $\bd$ as above. Following \cite{MR3230848},
for a positive integer $j$ prime to $p$, we define 
\[
\sht(j):=\sum_{\lambda=1}^{l}\sum_{i=1}^{d_{\lambda}-1}\left\lfloor \frac{ij}{p}\right\rfloor .
\]
We define a function $\sht\colon\Delta_{H}\to\ZZ$ by 
\[
\sht(f):=\begin{cases}
\sht(-\ord(f)) & (f\ne0)\\
0 & (f=0).
\end{cases}
\]
We define another function $\sht'$ on $\Delta_{H}$ by 
\[
\sht'(f):=\begin{cases}
\sht(f)-l & (f\ne0)\\
-d & (f=0).
\end{cases}
\]
Fibers of these functions are constructible subsets. In general, for
a function $u\colon\Delta_{H}\to\ZZ$ with constructible fibers, we
define the motivic integral
\[
\int_{\Delta_{H}}\LL^{u}\,d\mu_{H}:=\sum_{i\in\ZZ}[u^{-1}(i)]\LL^{i}\in\hat{\cM}',
\]
provided that the last infinite sum converges in $\hat{\cM}'$. When
it diverges, we formally put $\int_{\Delta_{H}}\LL^{f}\,d\mu_{H}:=\infty$. 

We define a numerical invariant $D_{\bd}:=\sum_{\lambda=1}^{l}(d_{\lambda}-1)d_{\lambda}/2$,
which we also think of as invariants of representations $V_{\bd}$
and $W_{\bd}$. Integrals 
\[
\int_{\Delta_{H}}\LL^{-\sht}\,d\mu_{H},\quad\int_{\Delta_{H}}\LL^{-\sht'}\,d\mu_{H}
\]
converge exactly when $D_{\bd}\ge p$ (for details of computation,
see \cite{MR3230848}). If $D_{\bd}=0$, then the corresponding $H$-action
is trivial. If $D_{\bd}=1$, then $\bd=(2,1,\dots,1)$ , $W/H\cong\A_{k}^{d}$
and the quotient map $W\to W/H$ has ramification locus of codimension
one. Therefore the case $D_{\bd}\ge2$ is of our main interest, although
the case $D_{\bd}=1$ will be discussed in Section \ref{sec:two-dim}
as a toy case. 

The quotient variety $W/H$ is factorial \cite[Th. 3.8.1]{MR2759466},
in particular, has the invertible canonical sheaf $\omega_{W/H}$.
The \emph{$\omega$-Jacobian ideal sheaf} $\cJ\subset\cO_{W/H}$ is
defined by $\cJ\omega_{W/H}:=\Image(\bigwedge^{|\bd|}\Omega_{W/H}\to\omega_{W/H})$.
Let $o\in W/H$ be the image of the origin of $W$, $J_{\infty}(W/H)$
the arc space of $W/H$ and $J_{\infty}(W/H)_{o}$ the preimage of
$o$ by the natural map $J_{\infty}(W/H)\to W/H$. The\emph{ motivic
stringy invariant} (resp. the \emph{motivic stringy invariant at $o$})
of $W/H$ is defined to be 
\begin{gather*}
M_{\st}(W/H):=\int_{J_{\infty}(W/H)}\LL^{\ord\cJ}\,d\mu_{W/H}\\
\left(\text{resp. }M_{\st}(W/H)_{o}:=\int_{J_{\infty}(W/H)_{o}}\LL^{\ord\cJ}\,d\mu_{W/H}\right).
\end{gather*}
If there exists a crepant resolution $\phi\colon Y\to W/H$, then
we have $M_{\st}(W/H)=[Y]$ and $M_{\st}(W/H)_{o}=[\phi^{-1}(o)]$. 

The following theorem proved in \cite{MR3230848} can be considered
as the motivic McKay correspondence for linear $H$-actions: 
\begin{thm}
\label{thm:McKay-H}If $D_{\bd}\ge2$, we have the following equalities
in $\hat{\cM}'\cup\{\infty\}$, 
\[
M_{\st}(W/H)_{o}=\int_{\Delta_{H}}\LL^{-\sht}\,d\mu_{H},\quad M_{\st}(W/H)=\int_{\Delta_{H}}\LL^{-\sht'}\,d\mu_{H}.
\]
\end{thm}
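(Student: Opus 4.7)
The plan is to realize both sides via motivic integration on the quotient stack $[W/H]$ (equivalently, via a wild change of variables for the non-\'etale cover $\pi \colon W \to W/H$) and identify the resulting sum over $H$-torsor types with the integral over $\Delta_H$. The key idea is that an arc $\Spec k[[t]] \to W/H$ meeting the branch locus of $\pi$ does not lift to an arc on $W$ but rather to a \emph{twisted arc}, that is, an $H$-equivariant morphism from a connected degree-$p$ cover of $\Spec k[[t]]$ to $W$, whose generic fiber is an $H$-torsor over $\Spec k((t))$ and is therefore classified by an element of $\Delta_H$.

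First I would stratify $J_\infty(W/H)_o$ by the isomorphism class of the associated $H$-torsor, obtaining a decomposition indexed by $f \in \Delta_H$. On the stratum corresponding to $f \neq 0$, the arc is pulled back along the normalization $\widetilde E_f$ of $\Spec k[[t]]$ inside $\Spec k((t))[z]/(z^p - z - f)$, giving an $H$-equivariant arc of $W$. A direct computation of the order of the $\omega$-Jacobian ideal $\cJ$ along the image arc in $W/H$, using Artin--Schreier theory and the explicit form of the representation $W$ attached to $\bd$, yields precisely the exponent $-\sht(-\ord f)$. This is the technical core: one writes $\pi^* \omega_{W/H}$ and $\omega_{\widetilde E_f}$ in a uniformizer on $\widetilde E_f$ and takes their difference, which after summing over the $d_\lambda$-dimensional indecomposables collapses to
\[
\sum_{\lambda=1}^{l} \sum_{i=1}^{d_{\lambda}-1} \left\lfloor \frac{ij}{p} \right\rfloor
\]
with $j = -\ord(f)$, matching the definition of $\sht(j)$ exactly.

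Next I would apply the wild change of variables formula to identify, stratum by stratum, the restricted motivic integral on the $f$-stratum with $d\mu_H$ near $f$ up to an affine-space factor that cancels after the $\LL^{-\sht}$ twist; assembling the contributions in $\hat{\cM}'$ produces the first equality. For the global invariant $M_\st(W/H)$, one drops the basepoint condition and allows arcs based anywhere in $W/H$; tracking the extra freedom in the basepoint (along the $l$-dimensional fixed locus of $H$ in the nontrivial strata, and along all of $W/H$ in the trivial stratum) produces the correction from $\sht$ to $\sht'$, in accordance with $\sht'(f) = \sht(f) - l$ for $f\neq 0$ and $\sht'(0) = -d$.

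The main obstacle is the wild change of variables formula itself: the classical Kontsevich--Denef--Loeser formula breaks down when the ramification is wild, so one must carefully define the motivic measure on the space of twisted arcs, verify that the stratification by $\Delta_H$ is measurable, and show that the measures of the individual strata assemble into a well-defined integral in $\hat{\cM}'$. A secondary point is convergence: the hypothesis $D_{\bd} \ge 2$ ensures that the branch locus of $\pi$ has codimension $\ge 2$, so that $W/H$ is log-terminal and both $\omega_{W/H}$ and $\cJ$ are well-behaved; strict convergence of both integrals requires $D_{\bd} \ge p$, but the identity is to be read in $\hat{\cM}' \cup \{\infty\}$.
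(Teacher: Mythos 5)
Your proposal follows essentially the same route as the paper's source for this statement: the paper does not reprove Theorem \ref{thm:McKay-H} but quotes it from \cite{MR3230848}, where it is established exactly as you describe---stratifying the (twisted) arcs of $W/H$ by the class of the associated $H$-torsor in $\Delta_{H}$, computing the weight of each stratum as the floor-sum defining $\sht$ via the ramification of the Artin--Schreier extension, and applying a wild change-of-variables formula of the type recalled in (\ref{eq:ch-vars-gen}), with $D_{\bd}\ge2$ ensuring $W\to W/H$ is \'etale in codimension one so that no ramification-divisor term appears, and with the basepoint freedom ($\LL^{l}$ on twisted strata, $\LL^{d}$ on the trivial one) accounting for the passage from $\sht$ to $\sht'$. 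The only slip is a side remark: $D_{\bd}\ge2$ does not make $W/H$ log-terminal (canonicity is equivalent to $D_{\bd}\ge p$); it is needed only for the codimension-two branch locus, and convergence is correctly handled by reading the identity in $\hat{\cM}'\cup\{\infty\}$.
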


Since the convergence of $M_{\st}(W/H)$ (or $M_{\st}(M/H)_{o}$)
is equivalent to that $W/H$ has only canonical singularities, this
theorem in particular implies that $W/H$ has canonical singularities
if and only if $D_{\bd}\ge p$ (see \cite{Yasuda:2017gd}). 

In the rest of this section, we will formulate a conjecture for $V/G$
similar to this theorem. For this purpose, we extend functions $\sht,\sht'\colon\Delta_{H}\to\ZZ$
to functions on $\Delta_{G}$ by
\begin{gather*}
\sht(f)=\sht(\tau_{0}(f))=\begin{cases}
\sht(-\ord(f)) & (\ord(f)<0)\\
\sht(0) & (\ord(f)\ge0)
\end{cases},\\
\sht'(f):=\sht'(\tau_{0}(f))=\begin{cases}
\sht'(-\ord(f)) & (\ord(f)<0)\\
\sht'(0) & (\ord(f)\ge0)
\end{cases}.
\end{gather*}
Each fiber of the extended functions $\sht$ or $\sht'$ is a cylinder
of level zero and 
\begin{equation}
\tau_{0}(\sht^{-1}(i))=\left(\sht|_{\Delta_{H}}\right)^{-1}(i),\quad\tau_{0}((\sht')^{-1}(i))=\left(\sht^{'}|_{\Delta_{H}}\right)^{-1}(i).\label{eq:tau-sht}
\end{equation}
For a function $u\colon\Delta_{G}\to\ZZ$ whose fibers are cylinders,
we define 
\[
\int_{\Delta_{G}}\LL^{u}\,d\mu_{G}:=\sum_{i\in\ZZ}\mu_{G}(u^{-1}(i))\LL^{i}.
\]
From (\ref{eq:tau-sht}), we have
\[
\int_{\Delta_{G}}\LL^{-\sht}\,d\mu_{G}=\int_{\Delta_{H}}\LL^{-\sht}\,d\mu_{H},\quad\int_{\Delta_{G}}\LL^{-\sht'}\,d\mu_{H}=\int_{\Delta_{H}}\LL^{-\sht'}\,d\mu_{H}.
\]
\begin{lem}
The scheme $V/G$ is of finite type over $k$ and factorial. 
\end{lem}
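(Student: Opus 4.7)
Identify $V/G$ with $\Spec R^G$, where $R=k[x_{1},\dots,x_{d}]$ and $R^G=\ker\rho$ is the ring of constants of the nilpotent derivation $\rho$ with $\rho^{p}=0$ encoding the $\alpha_{p}$-action attached to $\bd$. For finite type, my plan is to exhibit $R$ as a finite $R^G$-module and invoke the Artin--Tate lemma. Each $x_{\lambda,1}$ lies in $\ker\rho=R^G$, and each $p$-th power $x_{\lambda,i}^{p}$ lies in $R^G$ because $\rho(x_{\lambda,i}^{p})=p\,x_{\lambda,i}^{p-1}\rho(x_{\lambda,i})=0$ in characteristic $p$. The polynomial subalgebra $A\subset R^G$ generated by these invariants makes $R$ a finitely generated $A$-module (any monomial can be rewritten so that each exponent is $<p$), hence also a finite $R^G$-module; Artin--Tate applied to $k\subset R^G\subset R$ then yields finite generation of $R^G$ as a $k$-algebra.

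For factoriality, normality of $R^G$ comes for free: given $f\in R\cap\mathrm{Frac}(R^G)$ written as $f=a/b$ with $a,b\in R^G$, applying $\rho$ to the relation $fb=a$ forces $\rho(f)\,b=0$ in the domain $R$, whence $\rho(f)=0$ and $f\in R^G$. Thus $R\cap\mathrm{Frac}(R^G)=R^G$, so $R^G$ inherits normality from $R$. To prove $\Cl(R^G)=0$, I would appeal to a Samuel-type class-group exact sequence suitable for actions of commutative finite flat group schemes (in the spirit of Waterhouse and Fossum), namely
\[
0 \longrightarrow H^{1}(G,R^{\times}) \longrightarrow \Cl(R^{G}) \longrightarrow \Cl(R)^{G}.
\]
The right-hand term vanishes because $R$ is a polynomial ring, hence a UFD. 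The left-hand term vanishes because $R^{\times}=k^{\times}$ carries the trivial $G$-action, so $H^{1}(G,R^{\times})$ reduces to the character group $\Hom(\alpha_{p},\GG_{m})$, which is zero since $\alpha_{p}$ is unipotent. Hence $\Cl(R^G)=0$, that is, $R^G$ is factorial.

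The hard part will be justifying the Samuel--Waterhouse sequence cleanly in the infinitesimal setting, where the classical Galois-cohomological derivation is not directly available. A more elementary alternative to keep in reserve runs as follows: since $\alpha_{p}$ is infinitesimal, $\Spec R\to\Spec R^{G}$ is a universal homeomorphism, so every height-one prime $\mathfrak{p}\subset R^G$ has a unique height-one lift $\mathfrak{P}=(q)\subset R$; uniqueness forces $\rho(\mathfrak{P})\subset\mathfrak{P}$, i.e.\ $\rho(q)=hq$ for some $h\in R$. Then $\rho(q^{p})=p\,q^{p-1}\rho(q)=0$, so $q^{p}\in R^{G}\cap\mathfrak{p}$, and a careful analysis of $\mathfrak{P}$-adic valuations using $\rho(q)=hq$ and $\rho^{p}=0$ — combined once more with the vanishing $\Hom(\alpha_{p},\GG_{m})=0$ to rule out the only possible obstruction — shows that $q^{p}$ (up to a scalar in $k^{\times}$) already generates $\mathfrak{p}$ in $R^G$.
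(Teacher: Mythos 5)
Your finite-type argument is fine (it is essentially the paper's: $R^{p}\subseteq R^{G}$ and $k$ is perfect), and the normality remark is correct. The gap is in the factoriality part, exactly at the point you flag, and it is not merely a matter of "justifying the sequence cleanly": the sequence $0\to H^{1}(G,R^{\times})\to\Cl(R^{G})\to\Cl(R)^{G}$, with $H^{1}(G,R^{\times})$ computed as the character group $\Hom(\alpha_{p},\GG_{m})=0$, cannot hold for general (non-free, infinitesimal) $\alpha_{p}$-actions, because it would prove that the ring of constants of \emph{every} $p$-nilpotent derivation of a polynomial ring is factorial, and that is false. For instance, in characteristic $2$ the derivation $D=x^{2}\partial_{x}+y^{2}\partial_{y}$ on $R=k[x,y]$ satisfies $D^{2}=0$, hence defines an $\alpha_{2}$-action; its ring of constants is $k[u,v,w]/(w^{2}+u^{2}v+uv^{2})$ with $u=x^{2}$, $v=y^{2}$, $w=x^{2}y+xy^{2}$, and this ring is not factorial ($u$ is irreducible and divides $w^{2}=uv(u+v)$ but not $w$), although $R$ is a UFD, $R^{\times}=k^{\times}$ carries the trivial action, and $\Hom(\alpha_{2},\GG_{m})=0$. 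In the inseparable setting the correct Samuel-type receptacle for $\ker(\Cl(R^{G})\to\Cl(R))$ is not built from units: it is the group of logarithmic derivatives $\{D(t)/t\in R\mid t\in\mathrm{Frac}(R)^{\times}\}$ modulo $\{D(u)/u\mid u\in R^{\times}\}$, which is not a priori zero. Killing the relevant classes therefore genuinely requires the \emph{linearity} of the action, which your argument never uses; this is where the paper works: for the irreducible generator $f$ of the height-one prime of $R$ over $\fp$, either $f\mid D(f)$, and then $\deg D(f)\le\deg f$ plus $D^{p}=0$ force $D(f)=0$ and $\fp=fR^{G}$, or $f\nmid D(f)$, and then expanding $0=D(hf^{l})$ forces $p\mid l$ and $D(h)=0$, so $\fp=f^{p}R^{G}$.

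Your reserve argument fails at its first step: uniqueness of the height-one prime $\fP$ over $\fp$ does not force $D(\fP)\subseteq\fP$, since a derivation does not permute primes the way an automorphism does. Already for $\bd=(2)$, with $D(x)=0$, $D(y)=x$, the prime $(y)$ is the unique height-one prime of $k[x,y]$ over $(y^{p})\subset k[x,y^{p}]$, yet $D(y)=x\notin(y)$. This is precisely the paper's second case, where the conclusion is $\fp=(f^{p})R^{G}$ rather than $D$-stability of $(f)$; so the "careful analysis of $\fP$-adic valuations using $\rho(q)=hq$" has no starting point, and the subsequent appeal to $\Hom(\alpha_{p},\GG_{m})=0$ again inserts the wrong obstruction group.
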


\begin{proof}
Let $k[V]$ and $k[V/G]$ be the coordinate rings of $V$ and $V/G$
respectively, so that $k[V/G]=k[V]^{D}$. In particular $k[V]^{p}\subseteq k[V/G]$,
which easily implies that $V\arr V/G$ is a homeomorphism. Moreover
$V/G\arr V$ is finite, which implies that $V/G$ is of finite type
over $k$ because $k$ is perfect.

Let $P$ be a prime of height one of $k[V/G]$. We must show that
$P$ is principal. We have that $P$ is the restriction of a height
one prime ideal of $k[V]$, which is therefore generated by an irreducible
polynomial $f$. If $f\mid D(f)$ then, since $\deg(D(f))\le\deg(f)$,
we have $D(f)=cf$ for some $c\in k$. In particular $0=D^{p}(f)=c^{p}f$
so that $c=0$ and $D(f)=0$. In this case it follows easily that
$P=k[V/G]\cap(fk[V])=fk[V/G]$. 

So assume that $f\nmid D(f)$. We claim that $P=f^{p}k[V/G]$. Let
$x\in P-\{0\}$, that is $x=hf^{l}$ with $l>0$, $h$ coprime with
$f$ and $D(x)=0$. We have
\[
0=D(x)=D(h)f^{l}+hlf^{l-1}D(f)\then p\mid l\text{ and }D(h)=0
\]
 So $x=h(f^{p})^{l/p}\in f^{p}k[V/G]$.
\end{proof}
Thanks to this lemma, we can define the $\omega$-Jacobian ideal on
$V/G$ similarly to the case of $W/H$. In turn, we can define $M_{\st}(V/G),M_{\st}(V/G)_{o}$.
The following is our main conjecture.
\begin{conjecture}
\label{conj:main}If $D_{\bd}\ge2$, we have the following equalities
in $\hat{\cM}'\cup\{\infty\}$, 
\begin{gather*}
M_{\st}(V/G)_{o}=\int_{\Delta_{G}}\LL^{-\sht_{\bd}}\,d\mu_{G}\left(=M_{\st}(W/H)_{o}\right),\\
M_{\st}(V/G)=\int_{\Delta_{G}}\LL^{-\sht_{\bd}'}\,d\mu_{G}\left(=M_{\st}(W/H)\right).
\end{gather*}
\end{conjecture}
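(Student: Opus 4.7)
My plan is to adapt the strategy of \cite{MR3230848} that established Theorem~\ref{thm:McKay-H} for $H=\ZZ/p\ZZ$, running the analogous argument for the quotient $\pi\colon V\to V/G$. That proof proceeds by establishing a change-of-variables formula for $\pi_H\colon W\to W/H$, stratifying $J_\infty(W/H)$ by the isomorphism class of the $H$-torsor that a generic arc determines over $\Spec k((t))$, and summing the contributions weighted by the $\omega$-Jacobian. The lemma preceding the conjecture ensures that $V/G$ is factorial, so that $\omega_{V/G}$ and the $\omega$-Jacobian $\cJ\subset\cO_{V/G}$ are well-defined, and the two-dimensional model of Section~\ref{sec:two-dim} is meant precisely to supply the local shape of the change-of-variables formula for $\pi$ that feeds into the higher-dimensional argument. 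Once that formula is in hand, the second equality in each line of the conjecture follows automatically from Theorem~\ref{thm:McKay-H} combined with the identity $\int_{\Delta_G}\LL^{-\sht}\,d\mu_G=\int_{\Delta_H}\LL^{-\sht}\,d\mu_H$ already recorded before the lemma, so only the left-hand equalities need to be proved.

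The concrete steps proceed as follows. First, construct a map $\phi\colon J_\infty(V/G)^\circ\to\Delta_G$ on the set of arcs not contained in the branch locus of $\pi$, by pulling back $\pi$ along the generic arc $\Spec k((t))\to V/G$ and recording the resulting $G$-torsor as an element of $\Delta_G=k((t))/F(k((t)))$; factoriality of $V/G$ lets one write this element as the $\alpha_p$-class of an explicit local equation. Verify that $\phi$ is cylindrical: for each $n\ge 0$ and each constructible $U\subset\Delta_{G,n}$, the preimage $\phi^{-1}(\tau_n^{-1}(U))$ is a cylinder in $J_\infty(V/G)$ whose class can be compared with $[U]\LL^{-n(p-1)}$. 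Second, for each $f\in\Delta_G$ describe the fiber of $\phi$ as (a $G$-quotient of) the space of $G$-equivariant arcs from the cover $\Spec k[[t]][z]/(z^p-f)$ to $V$, and compute its motivic class in terms of $\ord(f)$. Third, determine $\ord\cJ$ on the stratum $\phi^{-1}(f)$: by the same combinatorics of Jordan block orders as in \cite{MR3230848}, this should equal $\sht(-\ord(f))$ for $f\ne 0$, with the appropriate boundary correction on $\Delta_G^{\ge 0}$ that produces $\sht'$ for the global statement. Fourth, assemble the pieces by a Fubini-type argument: stratum by stratum the integral $M_\st(V/G)_o=\int_{J_\infty(V/G)_o}\LL^{\ord\cJ}\,d\mu_{V/G}$ rewrites as $\int_{\Delta_G}\LL^{-\sht}\,d\mu_G$, and likewise for $M_\st(V/G)$.

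The main obstacle, and the essentially new feature compared to the $H$-case, is that a torsor class $[f]\in k((t))/F(k((t)))$ does \emph{not} determine the integral $k[[t]]$-model $\Spec k[[t]][z]/(z^p-f)$: replacing $f$ by $f+F(g)$ with $g\in t^n k[[t]]$ preserves the torsor over the punctured disk but perturbs the cover. This is precisely why $\Delta_G$ is an ind-pro-limit, and why the definition of $\mu_G$ contains the normalization $\LL^{-n(p-1)}$. The contribution to $M_\st(V/G)_o$ coming from a fixed torsor class is therefore not a single stratum but an inverse system of cylinders indexed by truncation level, and one must show that their measures stabilize: the relative measure of the layers at level $n+1$ over level $n$ must be exactly $\LL^{-(p-1)}$, so that the truncated integrals converge in $\hat{\cM}'$ to the expected limit. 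Establishing this stability amounts to analyzing how the space of $G$-equivariant arcs into $V$ transforms under perturbations of the defining equation of the cover by elements of $t^{np}k[[t]]$, and will rely crucially on the nilpotency of the derivation $\rho$ and on the detailed form of the representation attached to $\bd$. Secondarily, one may need to enlarge or refine the Grothendieck ring $\hat{\cM}'$, as the paper explicitly warns, in order to absorb the $\alpha_p$-gauge redundancy cleanly and to guarantee that the resulting motivic integrals live in a ring where the expected equalities actually hold.
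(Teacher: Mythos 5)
The statement you are addressing is stated in the paper as a conjecture, and the paper contains no proof of it: its only support consists of the two examples of Section \ref{sec:Two-Examples} ($\bd=(3)$ and $p=2$, $\bd=(2,2)$) and the change-of-variables computation of Section \ref{sec:two-dim} in the toy case $D_{\bd}=1$, which the paper points out is \emph{not} an instance of the conjectured equalities. So your proposal has to stand on its own, and as written it is a research program rather than a proof. The parts that do hold are the parenthetical equalities: the identity $\int_{\Delta_{G}}\LL^{-\sht}\,d\mu_{G}=\int_{\Delta_{H}}\LL^{-\sht}\,d\mu_{H}$ (and its primed version) together with Theorem \ref{thm:McKay-H} indeed reduces the conjecture to the left-hand equalities, exactly as the paper notes.

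The genuine gaps are your second through fourth steps, and you partly acknowledge them yourself. First, your claim that $\ord\cJ$ on the stratum over $f\in\Delta_{G}$ equals $\sht(-\ord(f))$ ``by the same combinatorics of Jordan block orders as in \cite{MR3230848}'' does not transfer: in the $H$-case that computation rests on the ramification theory of the integral model $\cO_{f}$, which is determined by the class $f\in\Delta_{H}$, whereas for $G=\alpha_{p}$ the integral extension $k[[t]][z]/(z^{p}-f)$ is \emph{not} determined by the class of $f$ in $k((t))/F(k((t)))$, as you observe; asserting the equality at this point is essentially restating the conjecture. Second, the stabilization you invoke in the final paragraph---that passing from truncation level $n$ to $n+1$ contributes exactly a factor $\LL^{-(p-1)}$ to the fiber measures---is the crux and is nowhere established; even in the two-dimensional toy case the paper needs the two-index truncations $\cJ_{m,n}\cV$ and an explicit computation of the fibers of $\psi_{m,n}$, whose analogue for general $\bd$ (where $V/G$ is singular) is unknown. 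Third, a change-of-variables formula for $V\to V/G$ requires a substitute for the Jacobian ideal of a morphism that is nowhere generically \'etale; the paper's closing remark states that already for $\bd=(2)$ it is an open problem how to produce the ideal $(x^{p-1})$ intrinsically as a ``Jacobian ideal'' of $V\to V/G$. In short, your plan identifies the correct objects and the correct obstacles (the ind-pro structure of $\Delta_{G}$, the gauge ambiguity $f\mapsto f+F(g)$, the missing Jacobian), but it closes none of them, so the conjecture remains exactly that.
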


\section{Two Examples\label{sec:Two-Examples}}

In this section, we see two examples supporting the conjecture that
$M_{\st}(V/G)=M_{\st}(W/H)$ and $M_{\st}(V/G)_{o}=M_{\st}(W/H)_{o}$. 

\subsection{}

We first consider the case $\bd=(3)$, supposing $p\ge3$. If $p=3$,
then $W/H$ has a crepant resolution $\phi\colon U\to W/H$ such that
\begin{gather*}
M_{\st}(W/H)=[U]=\LL^{3}+2\LL^{2},\\
M_{\st}(W/H)_{o}=[\phi^{-1}(o)]=2\LL+1.
\end{gather*}
If $p>3$, then $W/H$ is not log canonical, in particular, $M_{\st}(W/H)=M_{\st}(W/H)_{o}=\infty$.
See \cite[Example 6.23]{MR3230848}. 

As for the $G$-action on $V$, the corresponding derivation $D$
on the coordinate ring $k[x,y,z]$ is given by $D(x)=0$, $D(y)=x$,
$D(z)=y$. We can compute 
\[
k[V/G]=k[x,y^{p},z^{p},y^{2}-2xz]\cong k[X,Y,Z,W]/(Y^{2}-W^{p}-X^{p}Z),
\]
where $X,Y,Z,W$ correspond to $2x,y^{p},z^{p},y^{2}-2xz$ respectively.
By simple computation of blowups, we can easily see that if $p=3$,
then $V/G$ has a crepant resolution. Using this resolution, we see
that $M_{\st}(V/G)=\LL^{3}+2\LL^{2}$ and $M_{\st}(V/G)_{o}=2\LL+1$.
We also see that if $p>3$, then $V/G$ is not canonical and $M_{\st}(V/G)=\infty$.
More details of computation are as follows.

Let us compute a (partial) resolution of $U_{0}:=V/G$. The singular
locus $U_{0,\sing}$ of $U_{0}$ is the affine line defined by $X=Y=W=0$.
Let $U_{1}\to U_{0}$ be the blowup of $U_{0}$ along $U_{0,\sing}$.
This is a crepant morphism and the exceptional locus is the trivial
$\PP^{1}$-bundle over $U_{0,\sing}$. 

When $p=3$, then $U_{1,\sing}$ is again an affine line and $U_{1,\sing}$
has an affine open neighborhood isomorphic to $\Spec k[X,Y,Z]/(Y^{2}+XZ)\times\AA_{k}^{1}$.
Therefore the blowup $U_{2}\to U_{1}$ along $U_{1,\sing}$ is a crepant
resolution and its exceptional locus is again the trivial $\PP^{1}$-bundle
over $U_{1,\sing}$. Let $\phi\colon U_{2}\to U_{0}$ be the natural
morphism, which is a crepant resolution. Then, the above computation
shows that 
\begin{gather*}
M_{\st}(V/G)=[U_{2}]=\LL^{3}+2\LL^{2},\\
M_{\st}(V/G)_{o}=[\phi^{-1}(o)]=2\LL+1.
\end{gather*}

When $p>3$, we claim that $U_{1}$ is not log canonical, and neither
is $U_{0}=V/G$, since $U_{1}\to U_{0}$ is crepant. By an explicit
computation, we see that $U_{1}$ has an affine open subset $U_{1}'$
which is isomorphic to a hypersurface defined by $Y^{2}-X^{p-2}W^{p}-X^{p-2}Z=0$.
Its singular locus $U_{1,\sing}'$ is defined by $X=Y=0$. Let $U_{2}\to U_{1}'$
be the blowup along $U_{1,\sing}'$ and $B\to\AA_{k}^{4}$ the blowup
of the ambient affine space along the same locus $U_{1,\sing}'$ so
that $U_{2}$ is a closed subset of $B$. Let $E\subset B$ be the
exceptional divisor of $B\to\AA_{k}^{4}$. We see
\[
K_{U_{2}/U_{1}'}=-E|_{U_{2}}.
\]
If $p=5$, then $U_{2}$ is normal and $-E|_{U_{2}}=-2E'$ for a prime
divisor $E'$. Thus $E'$ has discrepancy $-2$ over $U_{1}$, hence
$U_{1}$ is not log canonical. If $p>5$, then $U_{2}$ is not normal.
We similarly take the blowup $\psi\colon U_{3}\to U_{2}'$ of an affine
open subset $U_{2}'\subset U_{2}$ such that $K_{U_{3}/U_{2}'}=-F|_{U_{3}}$
and $\psi^{*}(E|_{U_{2}})=F|_{U_{3}}$, where $F$ is the exceptional
divisor of the blowup of the ambient affine space. We conclude that
$K_{U_{3}/U_{1}'}=-2F|_{U_{3}}$. This shows that $U_{1}$ is not
log canonical. Thus $V/G$ is not log canonical and we have $M_{\st}(V/G)=\infty$.
Let $T\to V/G$ be a log resolution on which the above exceptional
divisor with discrepancy $<-1$ appears. This exceptional divisor
on $T$ surjects onto the singular locus of $V/G$. This shows that
$M_{\st}(V/G)_{o}=\infty$. 
\begin{rem}
When $p=3$, the quotient singularities above by $H$ and $G$ are
the same as the two hypersurface singularities in characteristic three
in \cite[Th. 3]{MR2995023} up to suitable coordinate transforms.
\end{rem}

\subsection{}

Next we consider the case $p=2$ and $\bd=(2,2)$. Then $W/H$ is
the symmetric product of two copies of $\AA_{k}^{2}$. It has a crepant
resolution $\phi\colon U\to W/H$ constructed simply by blowing up
the singular locus once. This resolution coincides with the Hilbert
scheme of two points of $\AA_{k}^{2}$, usually denoted by $\mathrm{Hilb}^{2}(\AA_{k}^{2})$.
From an explicit description, we know that 
\begin{gather*}
M_{\st}(W/H)=[U]=\LL^{4}+\LL^{3},\\
M_{\st}(W/H)_{o}=[\phi^{-1}(o)]=\LL+1.
\end{gather*}

The corresponding derivation on the polynomial ring $k[x_{0},y_{0},x_{1},y_{1}]$
is given by $D(x_{i})=0$, $D(y_{i})=x_{i}$. The coordinate ring
of $V/G$ is 
\[
k[x_{0},y_{0}^{2},x_{1},y_{1}^{2},x_{0}y_{1}+x_{1}y_{0}]\cong k[V,W,X,Y,Z]/(Z^{2}+V^{2}Y+X^{2}W).
\]
The singular locus of $V/G$ is defined by $X=V=Z=0$ and isomorphic
to an affine plane. Let $\phi\colon U\to V/G$ be the blowup along
the singular locus. This is a crepant resolution such that the exceptional
locus is the trivial $\PP^{1}$-bundle over $(V/G)_{\sing}\cong\AA_{k}^{2}$.
Thus $[U]=\LL^{4}+\LL^{3}$ and $[\phi^{-1}(o)]=\LL+1$. 

\section{The change of variables in dimension two\label{sec:two-dim}}

In this section, we present some computation supporting the equalities
$M_{\st}(V/G)_{o}=\int_{\Delta_{G}}\LL^{-\sht_{\bd}}\,d\mu_{G}$ and
$M_{\st}(V/G)=\int_{\Delta_{G}}\LL^{-\sht_{\bd}'}\,d\mu_{G}$ in Conjecture
\ref{conj:main}. However, when $D_{\bd}\ge2$, $V/G$ and $W/H$
have singularities, which make analysis more difficult. Therefore
we consider the case $D_{\bd}=1$ as a toy model. We will use jet
schemes and the theory of integration above those spaces. For generalities
see \cite[Section 4]{MR3230848}. 

Then $\bd$ is of the form $(2,1,\dots,1)$, but it is enough to consider
the special case $\bd=(2)$, because there is no essential difference
in the general case. When $\bd=(2)$, if we write $W=\Spec k[x,y]$
and let a generator of $H$ act on it by $y\mapsto y$, $x\mapsto x+y$,
then $W/H=\Spec k[x^{p}-xy^{p-1},y]\cong\AA_{k}^{2}$. The quotient
map $W\to W/H$ is ramified along the divisor $y$, in particular,
the map is not crepant. Because of this, we do not have the equalities
in Conjecture \ref{conj:main} in this case. Instead we have
\begin{gather*}
\LL^{2}=M_{\st}(W/H)=\int_{J_{\infty}(W/H)}1\,d\mu_{W/G}=\int_{\cJ_{\infty}\cW}\LL^{-\ord(y^{p-1})-\fs}\,d\mu_{\cW},\\
1=M_{\st}(W/H)_{o}=\int_{(J_{\infty}(W/H))_{o}}1\,d\mu_{W/G}=\int_{(\cJ_{\infty}\cW)_{o}}\LL^{-\ord(y^{p-1})-\fs}\,d\mu_{\cW}
\end{gather*}
the last integral of which we will explain now. The domain of integral,
$\cJ_{\infty}\cW$, is the space of twisted arcs of the quotient stack
$\cW=[W/H]$. The use of the stack $\cW$ is only for this conventional
notation and not really necessary. We can describe this space as
\[
\cJ_{\infty}\cW:=\bigsqcup_{f\in\Delta_{H}}\Hom^{H}(\Spec\cO_{f},W)/H,
\]
where $\Spec\cO_{f}$ is the normalization of $\Spec k[[t]]$ in the
$H$-torsor over $\Spec k((t))$ corresponding to $f$, $\Hom^{H}(-,-)$
is the set of $H$-equivariant morphisms. The $(\cJ_{\infty}\cW)_{o}$
is the subset of $\cJ_{\infty}\cW$ consisting of $H$-orbits of $H$-equivariant
maps $\Spec\cO_{f}\to W$ sending the closed point(s) onto the origin
of $W$, and $(J_{\infty}(W/H))_{o}$ is the set of arcs $\Spec k[[t]]\to W/G$
sending the closed point to $o$. We can define a motivic measure
on $\cJ_{\infty}\cW$ and there exists a natural map $\cJ_{\infty}\cW\to J_{\infty}(W/H)$
which is almost bijective (bijective outside measure zero subsets)
and induces an almost bijection $(\cJ_{\infty}\cW)_{o}\to(J_{\infty}(W/H))_{o}$.
The $\ord(y^{p-1})$ is the function on $\cJ_{\infty}\cW$ assigning
orders of $y^{p-1}$ along twisted arcs. Note that the ideal $(y^{p-1})\subset k[x,y]$
is the Jacobian ideal of the map $W\to W/H$. Finally $\fs$ is the
composition of the natural map $\cJ_{\infty}\cW\to\Delta_{H}$ and
$\sht\colon\Delta_{H}\to\ZZ$. The equality $\int_{J_{\infty}(W/H)}1\,d\mu_{W/H}=\int_{\cJ_{\infty}\cW}\LL^{-\ord(y^{p-1})-\fs}\,d\mu_{\cW}$
can be thought of as the change of variables formula for the map $\cJ_{\infty}\cW\to J_{\infty}(W/H)$.
More generally, for a measurable function $F\colon C\to\ZZ$ on a
subset $C\subset J_{\infty}(W/G)$, if $\phi\colon\cJ_{\infty}\cW\to J_{\infty}(W/G)$
denotes the natural map, then
\begin{equation}
\int_{C}\LL^{F}\,d\mu_{W/H}=\int_{\phi^{-1}(C)}\LL^{F\circ\phi-\ord(y^{p-1})-\fs}\,d\mu_{\cW}.\label{eq:ch-vars-1}
\end{equation}
The term $-\ord(y^{p-1})$ corresponds to the ramification divisor
of $W\to W/H$, the divisor defined by the Jacobian ideal, or to the
relative canonical divisor of the proper birational map $\cW\to W/H$.
If $D_{\bd}\ge2$, then $W\to W/H$ is \'{e}tale in codimension one
and has no ramification divisor, but $W/H$ acquires singularities
as compensation. Therefore the corresponding formula in that case
is
\begin{equation}
\int_{C}\LL^{F+\cJ}\,d\mu_{W/H}=\int_{\phi^{-1}(C)}\LL^{F\circ\phi-\fs}\,d\mu_{\cW}.\label{eq:ch-vars-gen}
\end{equation}
When $C=J_{\infty}(W/H)$ and $F\equiv0$, then the right hand side
becomes
\[
\int_{\cJ_{\infty}\cW}\LL^{-\fs}\,d\mu_{\cW}=\int_{\Delta_{H}}\LL^{-\sht'}\,d\mu_{H}.
\]

We show a similar formula in the case of $G$. We first introduce
a counterpart of $\cJ_{\infty}\cW$. For $f\in\Delta_{G}$, let $\Spec K_{f}\to\Spec k((t))$
be the corresponding $G$-torsor. The underlying topological space
of $\Spec K_{f}$ is always a singleton.
\begin{lem}
If $f\ne0$, then $K_{f}$ is reduced, equivalently, $K_{f}$ is a
field. 
\end{lem}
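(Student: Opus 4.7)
The plan is to show directly that the polynomial $z^p - f \in k((t))[z]$ is irreducible, which forces $K_f = k((t))[z]/(z^p-f)$ to be a field (hence reduced).

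First I would recall the standard dichotomy for purely inseparable Artin--Schreier-type polynomials in characteristic $p$: over any field $L$ of characteristic $p$, the polynomial $z^p - a$ (with $a \in L$) is either irreducible in $L[z]$, or it equals $(z-b)^p$ for some $b \in L$ with $b^p = a$. In the latter case $L[z]/(z^p-a)$ is non-reduced, and in the former case it is a field. Thus it suffices to show that $f$ is not a $p$-th power in $k((t))$.

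Next I would identify explicitly the image of Frobenius on $k((t))$. Any $g = \sum_j b_j t^j \in k((t))$ satisfies $g^p = \sum_j b_j^p t^{pj}$, so every $p$-th power has only terms whose exponents are divisible by $p$. (The fact that $k$ is perfect — which holds since $k$ is algebraically closed — is not even needed for this direction.) On the other hand, by the very definition of $\Delta_G$, the element $f$ is a $k$-linear combination of monomials $t^i$ with $p \nmid i$. Since $f \ne 0$, at least one coefficient $c_i$ with $p \nmid i$ is nonzero, so $f$ has a nonzero term whose exponent is prime to $p$. Therefore $f$ cannot be a $p$-th power in $k((t))$.

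Combining these two steps, $z^p - f$ is irreducible over $k((t))$, so $K_f$ is a field, in particular reduced. There is no real obstacle here: the only subtle point is the characteristic-$p$ factorization of $z^p - a$, which is classical.
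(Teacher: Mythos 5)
Your argument is correct: the dichotomy that $z^p-a$ over a field $L$ of characteristic $p$ is either irreducible or equal to $(z-b)^p$ with $b\in L$ is classical, and since every $p$-th power in $k((t))$ has only exponents divisible by $p$ while a nonzero $f\in\Delta_G$ has a nonzero term of exponent prime to $p$, the polynomial $z^p-f$ is irreducible and $K_f$ is a field. The paper proves the lemma by a different, torsor-theoretic route: it observes that $K_f$ is a finite $k((t))$-algebra of degree $p$ with one-point spectrum, so if it were non-reduced its reduced quotient would be a degree-one extension, i.e.\ $k((t))$ itself; this gives a section of the $G$-torsor $\Spec K_f\to\Spec k((t))$, forcing the torsor to be trivial and hence $f=0$ by the parameterization $\Delta_G\cong k((t))/F(k((t)))$ from Section 2. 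The two proofs rest on the same underlying fact (a nonzero element of $\Delta_G$ is not in the image of Frobenius), but your version is more elementary and self-contained, needing only the irreducibility criterion for $z^p-a$ and an explicit look at $p$-th powers of Laurent series, whereas the paper's version stays inside the torsor formalism already set up for the moduli description, at the cost of a small degree-counting step (the reduced quotient of a purely inseparable degree-$p$ algebra generated by one element has degree $1$ or $p$). Either argument is acceptable; yours trades the moduli-theoretic packaging for a direct field-theory computation.
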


\begin{proof}
The $K_{f}$ is a finite extension of $k((t))$ of degree $p$. If
$K_{f}$ is non-reduced, then the associated reduced ring $(K_{f})_{\red}$
is an extension of degree one, hence the natural map $k((t))\to(K_{f})_{\red}$
is an isomorphism. This means that the $G$-torsor $\Spec K_{f}\to\Spec k((t))$
admits a section, hence it is a trivial torsor and $f=0$. 
\end{proof}
For $f\ne0$, let $\Spec\cO_{f}$ be the normalization of $\Spec k[[t]]$
in $\Spec K_{f}$. For $f=0$, we define $\cO_{f}:=k[[t]][z]/(z^{p})$.
We say that a morphism $\Spec\cO_{f}\to V$ is $G$-equivariant if
the composition map $\Spec K_{f}\to\Spec\cO_{f}\to V$ is $G$-equivariant.
Note that the $G$-action on $\Spec K_{f}$ does not generally extend
to $\Spec\cO_{f}$ , which is the reason that we define $G$-equivariant
morphisms $\Spec\cO_{f}\to V$ in this way. Let $\Hom^{G}(\Spec K_{f},V)$
and $\Hom^{G}(\Spec\cO_{f},V)$ be the set of $G$-equivariant morphisms
$\Spec K_{f}\to V$ and $\Spec\cO_{f}\to V$ respectively. Regarding
$\Hom(\Spec\cO_{f},V)$ as a subset of $\Hom(\Spec K_{f},V)$, we
have 
\[
\Hom^{G}(\Spec\cO_{f},V)=\Hom(\Spec\cO_{f},V)\cap\Hom^{G}(\Spec K_{f},V).
\]
We then put 
\[
\cJ_{\infty}\cV:=\bigsqcup_{f\in\Delta_{G}}\Hom^{G}(\Spec\cO_{f},V).
\]
We now give an explicit description of this set. Let $D$ denote the
derivation on $k[x,y]$ given by $D(y)=x$, $D(x)=0$, which corresponds
to the $G$-action on $V=\Spec k[x,y]$. The $G$-action on $V$ is
given by the coaction:
\begin{align*}
\theta\colon k[x,y] & \to k[x,y][\epsilon]\\
x & \mapsto x\\
y & \mapsto y+x\epsilon
\end{align*}
The $G$-action on $\Spec K_{f}$ with $K_{f}=k((t))[z]/(z^{p}-f)$
is given by:
\begin{align*}
\psi_{f}\colon K_{f} & \to K_{f}[\epsilon]\\
z & \mapsto z+\epsilon
\end{align*}
An element of $K_{f}$ is uniquely written as $\sum_{i=0}^{p-1}a_{i}z^{i}$,
$a_{i}\in k((t))$ and a map $\gamma\colon\Spec K_{f}\to V$ is uniquely
determined by two elements $\gamma^{*}(x)=\sum_{i=0}^{p-1}a_{i}z^{i}$
and $\gamma^{*}(y)=\sum_{i=0}^{p-1}b_{i}z^{i}$ of $K_{f}$. The map
$\gamma$ is $G$-equivariant if and only if 
\[
\psi_{f}(\gamma^{*}(x))=(\gamma^{*}\otimes\id_{k[\epsilon]})(\theta(x)),\,\psi_{f}(\gamma^{*}(y))=(\gamma^{*}\otimes\id_{k[\epsilon]})(\theta(y)).
\]
The left equality is explicitly written as
\[
\sum_{i}a_{i}(z+\epsilon)^{i}=\sum_{i}a_{i}z^{i},
\]
which is equivalent to saying that $a_{i}=0$ for $i>0$. The right
equality then says that
\[
\sum_{i}b_{i}(z+\epsilon)^{i}=\sum_{i}b_{i}z^{i}+\sum_{i}a_{i}z^{i}\epsilon.
\]
This is equivalent to requiring $b_{1}=a_{0}$ and $b_{i}=0$ for
$i>1$. As a consequence, we can identify $\Hom^{G}(\Spec K_{f},V)$
with 
\[
\{(a,b+az)\in K_{f}^{2}\mid a,b\in k((t))\}\cong k((t))^{2}.
\]
For $f\ne0$, if we extend the order function on $k((t))$ to $K_{f}$
as a valuation, then $\ord(z)=\frac{\ord(f)}{p}$. Therefore, with
the above identification, $(a,b+az)\in\Hom^{G}(\Spec K_{f},V)$ lies
in $\Hom^{G}(\Spec\cO_{f},V)$ if and only if $\ord(a)\ge0$ and $\ord(b+az)\ge0$.
Since $\ord(f)$ is coprime with $p$, we have $\Z\ni\ord(b)\neq\ord(az)\notin\Z$
and therefore 
\[
\ord(b+az)=\min\{\ord(b),\ord(az)\}
\]
Thus the two conditions translate into $\ord(a)\ge\max\{0,\lceil-\ord(f)/p\rceil\}$
and $\ord(b)\ge0$. In conclusion, for every $f$, the set $\Hom^{G}(\Spec\cO_{f},V)$
is identified with the following subset of $\cO_{f}^{2}$,
\[
\{(a,b+az)\in\cO_{f}^{2}\mid a\in t^{s_{f}}\cdot k[[t]],\,b\in k[[t]]\}\quad(s_{f}:=\max\{0,\lceil-\ord(f)/p\rceil\}=\sht'(f)+2).
\]
We then identify $\cJ_{\infty}\cV$ with 
\[
\bigsqcup_{f\in\Delta_{G}}t^{s_{f}}k[[t]]\oplus k[[t]]
\]
and write its elements as triples $(f,a,b)$. For $m\in\ZZ_{\ge0}$,
the image of $\Hom^{G}(\Spec\cO_{f},V)$ under the natural map $\cO_{f}^{2}\to\cO_{f}^{2}/t^{m+1}\cO_{f}^{2}$,
which we denote by $\Hom^{G}(\Spec\cO_{f},V)_{m}$, coincides with
the image of the injective map
\[
\frac{t^{s_{f}}k[[t]]}{t^{s_{f}+m+1}k[[t]]}\oplus\frac{k[[t]]}{t^{m+1}k[[t]]}\arr\cO_{f}^{2}/t^{m+1}\cO_{f}^{2},\,(a,b)\mapsto(a,b+az).
\]

We define a motivic measure $\mu_{\cV}$ on $\cJ_{\infty}\cV$ as
follows. For $m,n\in\ZZ_{\ge0}$, let 
\[
\cJ_{m,n}\cV:=\bigsqcup_{f\in\Delta_{G,n}}\frac{t^{s_{f}}k[[t]]}{t^{s_{f}+m+1}k[[t]]}\oplus\frac{k[[t]]}{t^{m+1}k[[t]]}.
\]
Here $s_{f}=s_{g}$ where $g\in\Delta_{G}$ is any lift of $f\in\Delta_{G,n}$.
For integers $n,j$ with $np\ge j$, let $\Delta_{G,n}^{\ge j}\subset\Delta_{G,n}$
be the subspace of $f\in\Delta_{G,n}$ with $\ord(f)\ge j$. This
is an affine space of finite dimension and $\Delta_{G,n}$ is the
union of $\Delta_{G,n}^{\ge j}$, $j\le np$. This filtration also
allows to write $\shJ_{m,n}\shV$ as an increasing union of affine
spaces, so that the notion of constructible subsets and their measure
is well defined. For $m'\ge m$ and $n'\ge n$, the natural map $\cJ_{m',n'}\cV\to\cJ_{m,n}\cV$
is a trivial $\A_{k}^{2(m'-m)+(p-1)(n'-n)}$-bundle. Let $\pi_{m,n}\colon\cJ_{\infty}\cV\to\cJ_{m,n}\cV$
be the natural map. We say that a subset $C\subset\cJ_{\infty}\cV$
is a \emph{cylinder of level $(m,n)$} if $C=\pi_{m,n}^{-1}\pi_{m,n}(C)$
and $\pi_{m,n}(C)$ is a constructible subset of $\cJ_{m,n}\cV$.
Then we define the measure $\mu_{\cV}(C)$ of $C$ as 
\[
\mu_{\cV}(C):=[\pi_{m,n}(C)]\LL^{-2m-(p-1)n}.
\]
We can further extend this measure $\mu_{\cV}$ to measurable subsets,
following \cite[Appendix]{MR1905024}. A subset $C\subset\cJ_{\infty}\cV$
is \emph{measurable }if there exists a sequence of cylinders $C_{1},C_{2},\dots$
approximating $C$ (which means that there exists another sequence
$B_{1},B_{2},\dots$ of cylinders such that $\lim_{i\to\infty}\mu_{\cV}(B_{i})=0$
and for each $i$, the symmetric difference $C\triangle C_{i}=(C\cup C_{i})\setminus(C\cap C_{i})$
is contained in $B_{i}$). For a measurable subset $C$, we define
$\mu_{\cV}(C):=\lim_{i\to\infty}\mu_{\cV}(C_{i})$. A function $f\colon C\to\ZZ$
on a subset $C\subset\cJ_{\infty}\cV$ is said to be \emph{measurable}
if all fibers $f^{-1}(n)$ are measurable. The integral $\int_{C}\LL^{f}\,d\mu_{\cV}$
is then defined to be $\sum_{n\in\ZZ}[f^{-1}(n)]\LL^{n}$ in $\hat{\cM}'$,
provided that this infinite sum converges. 

The quotient variety $V/G$ has the coordinate ring $k[x,y^{p}]$.
The arc space $J_{\infty}(V/G)$ of $V/G$ is identified with $k[[t]]^{2}$
by looking at the images of $x$ and $y^{p}$ in $k[[t]]$. Similarly
the $m$-th jet scheme $J_{m}(V/G)$ is identified with $\left(k[[t]]/(t^{m+1})\right)^{2}$.
 Given an element of $\cJ_{\infty}\cV$ regarded as a $G$-equivariant
morphism $\Spec\cO_{f}\to V$, taking the $G$-quotient of the induced
morphism $\Spec K_{f}\to V$, we obtain a morphism $\Spec k((t))\to V/G$.
We easily see that this morphism extends to a morphism $\Spec k[[t]]\to V/G$.
Thus we obtain a map $\psi\colon\cJ_{\infty}\cV\to J_{\infty}(V/G)$.
In concrete terms, the map sends $(f,a,b)$ to $(a,b^{p}+fa^{p})$.
For $n\ge m\ge0$, the map $\psi$ induces map 
\[
\psi_{m,n}\colon\cJ_{m,n}\cV\to J_{m}(V/G),\,(f,a,b)\mapsto(a,b^{p}+fa^{p}).
\]

Let us take an element 
\[
(\alpha,\beta)\in\left(\frac{k[[t]]}{t^{m+1}k[[t]]}\right)^{\oplus2}=J_{m}(V/G).
\]
We will describe the fiber $\psi_{m,n}^{-1}((\alpha,\beta))$. Namely
we will describe the set of triples $(f,a,b)$ with $f\in\Delta_{G,n}$,
$a\in t^{s_{f}}k[[t]]/t^{s_{f}+m+1}k[[t]]$, $b\in k[[t]]/t^{m+1}k[[t]]$
such that $a=\alpha$ and $b^{p}+fa^{p}=\beta$ in $k[[t]]/t^{m+1}k[[t]]$.
Let us write $a=\sum_{i\le s_{f}+m}a_{i}t^{i}$, $b=\sum_{i\le m}b_{i}t^{i}$,
$f=\sum_{i\le np-1}f_{i}t^{i}$. The equality $a=\alpha$ determines
$a_{i}$, $i\le m$, requires that $s_{f}\leq\ord(\alpha)$ and put
no other constraint on $a_{i}$, $i>m$, $b_{i}$ or $f_{i}$. For
the equality $b^{p}+fa^{p}=\beta$, we note that $b^{p}$ (resp. $fa^{p}$)
has only terms of degrees divisible (resp. not divisible) by $p$.
Therefore this equality determines $b_{i}$, $i\le\lfloor m/p\rfloor$.
If $a$ is fixed and $m\ge p\cdot\ord(a)=p\cdot\ord(\alpha)$, then
the same equation determines $f_{i}$, $i\le m-p\cdot\ord(a)$, but
put no more constraint on $a_{i}$, $b_{i}$, $f_{i}$. In this case,
since the resulting $f$ is such that $fa^{p}$ has neither term of
negative degree nor term of degree divisible by $p$, it does follow
that $f\in\Delta_{G,n}$ and that $s_{f}\leq\ord(\alpha)$. Notice
moreover that, if $\beta'$ is the subsum of $\beta$ of degree coprime
with $p$, then $s_{f}$ is a function of $\ord(\beta')$, so that,
in particular, the number $s_{f}$ does not depends of the choice
of $(f,a,b)$ over $(\alpha,\beta)$. For simplicity, suppose $m=m'p$
for some $m'\in\NN$. As a consequence of the above computation, if
$\alpha\ne0$ and $m'\ge\ord(\alpha)$, then $\psi_{m,n}^{-1}((\alpha,\beta))$
is the affine space of dimension
\begin{align*}
 & \overset{\text{no. of free \ensuremath{a_{i}}}}{\overbrace{\{(s_{f}+m)-m\}}}+\overset{\text{no. of free \ensuremath{b_{i}}}}{\overbrace{(m-\lfloor m/p\rfloor)}}+\overset{\text{no. of free \ensuremath{f_{i}}}}{\overbrace{\{n-(m'-\ord(a))\}(p-1)}}\\
 & =s_{f}+(p-1)n+(p-1)\ord(a).
\end{align*}
We define functions
\begin{gather*}
s\colon\cJ_{\infty}\cV\to\ZZ,\,(f,a,b)\mapsto s_{f}=\sht'(f)+2,\\
\ord(x)\colon\cJ_{\infty}\cV\to\ZZ\sqcup\{\infty\},\,(f,a,b)\mapsto\ord(a).
\end{gather*}
From the above argument, $s$ is the composition of $\psi\colon\cJ_{\infty}\cV\to J_{\infty}(V/G)$
and a function $s'\colon J_{\infty}(V/G)\to\ZZ$, the latter having
cylindrical fibers. The function $\ord(x)$ on $\cJ_{\infty}\cV$
also factors through $\ord(x)\colon J_{\infty}(V/G)\to\ZZ\cup\{\infty\}$,
whose fibers are also cylinders except that $\ord(x)^{-1}(\infty)$
is a measurable subset of measure zero. Let $C\subset J_{\infty}(V/G)$
be a cylinder of level $m$. The inverse image $\psi^{-1}(C)$ is
a cylinder of level $(m,m)$ and 
\[
\pi_{m,m}(\psi^{-1}(C))=\psi_{m,m}^{-1}(\pi_{m}(C)).
\]
If $s$ and $\ord(x)$ take constant values $s_{0}$ and $r$ on $\psi^{-1}(C)$,
then 
\[
\mu_{V/G}(C)=\LL^{-s_{0}-(p-1)r}\mu_{\cV}(\psi^{-1}(C)).
\]
By a standard formal argument on measurable subsets (for instance,
see \cite[Proof of Th. 5.20]{MR3230848}), this equality is valid
also when $C$ is a measurable subset. Subdividing a given measurable
subset $C\subset J_{\infty}(V/G)$, we can reduce to the case where
$s$ and $\ord(x)$ are constant. These arguments lead to the change
of variables formula:
\begin{thm}
\label{thm:change-vars}Let $C\subset J_{\infty}(V/G)$ be a subset
and $F\colon C\arr\Z$ be a measurable function. Then
\[
\int_{C}\LL^{F}\,d\mu_{V/G}=\int_{\psi^{-1}(C)}\LL^{F\circ\psi-s-(p-1)\ord(x)}\,d\mu_{\cV}.
\]
In particular, 
\[
\LL^{2}=M_{\st}(V/G)=\int_{J_{\infty}(V/G)}1\,d\mu_{V/G}=\int_{\cJ_{\infty}\cV}\LL^{-s-(p-1)\ord(x)}\,d\mu_{\cV}.
\]
\end{thm}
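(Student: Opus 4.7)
The plan is to promote the fiber-dimension computation carried out in the paragraphs immediately preceding the theorem to an integrated change-of-variables identity by a partitioning and approximation argument. Combining the explicit fiber description of $\psi_{m,n}^{-1}((\alpha,\beta))$ with the definitions of $\mu_{V/G}$ and $\mu_{\cV}$ yields the pointwise equality
\[
\mu_{V/G}(C) = \LL^{-s_{0}-(p-1)r}\,\mu_{\cV}(\psi^{-1}(C))
\]
whenever $C \subset J_{\infty}(V/G)$ is a cylinder of sufficiently high level on which both $s$ (pulled down to $J_{\infty}(V/G)$) and $\ord(x)$ take constant values $s_{0}$ and $r$ respectively, and on which $\ord(x) < \infty$. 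The author has essentially already verified this identity via the dimension count $s_{f}+(p-1)n+(p-1)\ord(a)$ combined with the $\A_{k}^{p-1}$-triviality of $\Delta_{G,n+1}\to\Delta_{G,n}$.

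Next I would stratify a general measurable $C$ as a countable disjoint union $C = \bigsqcup_{(s_{0},r)} C_{s_{0},r}$ along the level sets of these two functions. The function $s$ descends to a function with cylindrical fibers on $J_{\infty}(V/G)$, and $\ord(x)$ has cylindrical fibers away from the measure-zero subset $\ord(x)^{-1}(\infty)$ of arcs lying in $\{x=0\}$, which can be discarded. Thus each $C_{s_{0},r}$ is measurable, its preimage $\psi^{-1}(C_{s_{0},r}) \subset \cJ_{\infty}\cV$ is measurable with $s$ and $\ord(x)\circ\psi$ constant on it, and summing the pointwise identity over the pieces gives
\[
\mu_{V/G}(C) = \int_{\psi^{-1}(C)}\LL^{-s-(p-1)\ord(x)}\,d\mu_{\cV}.
\]
The formal extension from cylinders to measurable subsets and the convergence of the countable sum in $\hat{\cM}'$ follow the template of \cite[Appendix]{MR1905024} and \cite[Proof of Th.~5.20]{MR3230848}. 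Integrating against $\LL^{F}$ on the level sets of $F$ then gives the first assertion of the theorem.

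For the final formula, note that $V/G = \Spec k[x,y^{p}] \cong \AA_{k}^{2}$ is smooth, so $M_{\st}(V/G) = [V/G] = \LL^{2}$ and $M_{\st}(V/G) = \int_{J_{\infty}(V/G)} 1\, d\mu_{V/G}$. Applying the change-of-variables formula with $C = J_{\infty}(V/G)$ and $F \equiv 0$ delivers the last integral. The main obstacle is the measure-theoretic bookkeeping: verifying that the partition $\{C_{s_{0},r}\}$ is genuinely measurable in $J_{\infty}(V/G)$ and that its preimages are measurable with respect to the newly defined $\mu_{\cV}$, whose construction involves the ind-pro-limit structure on $\Delta_{G}$ rather than the simpler ind-limit structure seen in earlier McKay-correspondence work for genuine finite group actions; controlling the tail behavior in $s_{f}$ as $\ord(f)\to-\infty$ to ensure convergence in $\hat{\cM}'$ is the analytic crux.
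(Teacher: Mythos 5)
Your proposal follows the paper's own argument essentially verbatim: the cylinder-level identity $\mu_{V/G}(C)=\LL^{-s_{0}-(p-1)r}\mu_{\cV}(\psi^{-1}(C))$ coming from the fiber-dimension count for $\psi_{m,n}$, subdivision of a general measurable set along the level sets of $s$ and $\ord(x)$ (discarding the measure-zero locus $\ord(x)^{-1}(\infty)$), the standard extension to measurable subsets as in the cited references, and smoothness of $V/G\cong\A_{k}^{2}$ for the final equality $M_{\st}(V/G)=\LL^{2}$. This is correct and takes essentially the same approach as the paper.
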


In the last line $M_{\st}(V/G)=\int_{J_{\infty}(V/G)}1\,d\mu_{V/G}$
follows from definition and the fact that $V/G$ is smooth, while
$\int_{J_{\infty}(V/G)}1\,d\mu_{V/G}=\LL^{2}$ from the fact that
$J_{0}(V/G)=V/G=\A_{k}^{2}$. 

We describe here an alternative way to check the equality 
\[
\int_{\cJ_{\infty}\cV}\LL^{-s-(p-1)\ord(x)}\,d\mu_{\cV}=\LL^{2}.
\]
 The set 
\[
C_{\ge0,i}:=\{(f,a,b)\in\cJ_{\infty}\cV\mid\ord(f)\ge0,\,\ord(a)=i\}
\]
is a cylinder of level $(i,0)$ with $\pi_{i,0}(C_{\ge0,i})\cong\Gm\times\A_{k}^{i+1}$.
Therefore 
\[
\mu_{\cV}(C_{\geq0,i})=(\LL-1)\LL^{i+1}\LL^{-2i}=(\LL-1)\LL^{-i+1}
\]
Their disjoint union is $C_{\ge0}:=\{(f,a,b)\mid\ord(f)\ge0\}$ and
\begin{align*}
\int_{C_{\ge0}}\LL^{-s-(p-1)\ord(x)}\,d\mu_{\cV} & =\sum_{i\ge0}\mu_{\cV}(C_{0,i})\LL^{-(p-1)i}\\
 & =\frac{\LL^{2}-\LL}{1-\LL^{-p}}.
\end{align*}
For $j=-(pd+e)<0$ with $d\in\NN$ and $1\le e\le p-1$ and for $i\in\NN$,
let $C_{j,i}:=\{(f,a,b)\in\cJ_{\infty}\cV\mid\ord(f)=j,\,\ord(a)=s_{f}+i\}$.
This set is a cylinder of level $(i,0)$ such that 
\[
\pi_{i,0}(C_{j,i})\cong\overset{f}{\overbrace{\Gm\times\A_{k}^{d(p-1)+e-1}}}\times\overset{a}{\overbrace{\Gm}}\times\overset{b}{\overbrace{\A_{k}^{i+1}}}.
\]
Thus 
\[
\mu_{\cV}(C_{j,i})=(\LL-1)^{2}\LL^{-i+d(p-1)+e}.
\]
The disjoint union of all the $C_{j,i}$ is $C_{<0}=\{(f,a,b)\mid\ord(f)<0\}$.
Then
\begin{align*}
\int_{C_{<0}}\LL^{-s-(p-1)(x)}\,d\mu_{\cV} & =\sum_{1\le e\le p-1}\sum_{d\in\NN}\sum_{i\in\NN}(\LL-1)^{2}\LL^{-i+d(p-1)+e}\times\LL^{-(d+1)-(p-1)(d+1+i)}\\
 & =\sum_{1\le e\le p-1}\sum_{d\in\NN}\sum_{i\in\NN}(\LL-1)^{2}\LL^{-pi+e-d-p}\\
 & =(\LL-1)^{2}\LL^{-p}\frac{\LL^{1}+\cdots+\LL^{p-1}}{(1-\LL^{-p})(1-\LL^{-1})}\\
 & =\frac{\LL-\LL^{2-p}}{1-\LL^{-p}}.
\end{align*}
It follows that 
\[
\int_{\cJ_{\infty}\cV}\LL^{-s-(p-1)\ord(x)}\,d\mu_{\cV}=\frac{\LL^{2}-\LL}{1-\LL^{-p}}+\frac{\LL-\LL^{2-p}}{1-\LL^{-p}}=\LL^{2}.
\]
\begin{rem}
It is natural to see the function $(p-1)\ord(x)$ in Theorem \ref{thm:change-vars}
as a counterpart of the function $\ord(y^{p-1})=(p-1)\ord(y)$ in
the change of variables formula (\ref{eq:ch-vars-1}) for the case
of $H$. The latter is the order function associated to the Jacobian
ideal $(y^{p-1})\subset k[x,y]$ of the map $W\to W/H$. It is a natural
problem, how to derive the ideal $(x^{p-1})$ as the ``Jacobian ideal''
of $V\to V/G$, a map not generically \'{e}tale. 
\end{rem}

\appendix

\section{Representation theory of $\alpha_{p}$}

In this appendix we recall the representation theory of the group
scheme $\alpha_{p}$ over $\F_{p}$.

Given an $\F_{p}$-algebra $A$ we denote by $\Mod^{\alpha_{p}}A$
the category of $A$-modules with an action of $\alpha_{p}\times A$,
or, equivalently, a coaction of the Hopf algebra $A[\alpha_{p}]=A[\varepsilon]$.
We introduce also the category $\Mod^{nil}A$ of pairs $(M,\xi)$
where $M$ is an $A$-module and $\xi\colon M\arr M$ is an $A$-linear
map which is $p$-nilpotent, that is $\xi^{p}=0$. Given $(M,\xi)\in\Mod^{nil}A$
we define
\[
\exp(\xi\varepsilon)=\sum_{i=0}^{p-1}\frac{\xi^{i}\varepsilon^{i}}{i!}\colon M\arr M\otimes\F_{p}[\varepsilon]
\]
\begin{prop}
The functor
\[
\Mod^{nil}(A)\arr\Mod^{\alpha_{p}}(A)\comma(M,\xi)\longmapsto(M,\exp(\xi\varepsilon))
\]
is well defined and an equivalence of categories. Moreover for $(M,\xi),$
$(N,\eta)\in\Mod^{nil}(A)$ we have $M^{\alpha_{p}}=\Ker(\xi)$ and
that 
\[
\xi\otimes\id_{M}+\id_{N}\otimes\eta\colon M\otimes N\arr M\otimes N
\]
corresponds to the tensor product of $\alpha_{p}$-modules.

If $B$ is an $A$-algebra and $(B,\xi)\in\Mod^{nil}(A)$ then $\alpha_{p}$
acts on the $A$-algebra $B$, that is $A\arr B$ and $B\otimes_{A}B\arr B$
are $\alpha_{p}$-equivariant, if and only if $\xi\colon B\arr B$
is an $A$-derivation.
\end{prop}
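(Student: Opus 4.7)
The plan is to treat the four assertions in order, using throughout that $A[\varepsilon]$ is the Hopf algebra with $\varepsilon$ primitive: $\Delta(\varepsilon) = \varepsilon \otimes 1 + 1 \otimes \varepsilon$ and counit sending $\varepsilon$ to $0$. For well-definedness of the functor, given $(M,\xi) \in \Mod^{nil}(A)$ I set $\rho := \exp(\xi\varepsilon)$. The counit axiom $\rho \equiv \id \pmod{\varepsilon}$ is immediate. For coassociativity, since $\varepsilon \otimes 1$ and $1 \otimes \varepsilon$ commute in $A[\varepsilon] \otimes_A A[\varepsilon]$, the standard commuting-exponentials identity
\begin{equation*}
\exp\bigl(\xi(\varepsilon \otimes 1 + 1 \otimes \varepsilon)\bigr) \;=\; \exp(\xi \cdot \varepsilon \otimes 1)\cdot\exp(\xi \cdot 1 \otimes \varepsilon)
\end{equation*}
translates directly into $(\id \otimes \Delta)\rho = (\rho \otimes \id)\rho$.

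For the equivalence of categories, I construct the inverse functor. Given any coaction, write $\rho(m) = \sum_{i=0}^{p-1} \rho_i(m) \otimes \varepsilon^i$; the counit axiom forces $\rho_0 = \id_M$, and comparing $(\rho \otimes \id)\rho$ with $(\id \otimes \Delta)\rho$ coefficient by coefficient yields
\begin{equation*}
\rho_a \rho_b = \binom{a+b}{a}\rho_{a+b} \ \text{ when } a+b < p, \qquad \rho_a \rho_b = 0 \ \text{ when } a+b \geq p.
\end{equation*}
A short induction on $i$ gives $\rho_i = \xi^i/i!$ with $\xi := \rho_1$, and the case $a=1$, $b=p-1$ of the second family yields $\xi^p = 0$. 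The two constructions are mutually inverse by inspection, and an $A$-linear map intertwines coactions iff it intertwines the associated nilpotent operators, so the functor is fully faithful and essentially surjective.

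The remaining three assertions are brief verifications. The invariants consist of those $m$ with $\xi^i(m) = 0$ for all $i \geq 1$, equivalently $\xi(m) = 0$, so $M^{\alpha_p} = \Ker(\xi)$. For the tensor product, $\rho_{M \otimes N}$ is obtained from $\rho_M \otimes \rho_N$ by multiplying the two $A[\varepsilon]$-factors, and since $\xi \otimes \id$ and $\id \otimes \eta$ commute on $M \otimes_A N$ another application of the commuting-exponentials identity yields $\rho_{M \otimes N} = \exp\bigl((\xi \otimes \id + \id \otimes \eta)\varepsilon\bigr)$, identifying the induced nilpotent as $\xi \otimes \id + \id \otimes \eta$. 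For the algebra statement, $A$-linearity of $\xi$ forces $\xi(1) = 0$, so the unit $A \to B$ is automatically equivariant, and equivariance of $\mu\colon B \otimes_A B \to B$ unwinds via the tensor-product description to $\xi\mu = \mu(\xi \otimes \id + \id \otimes \xi)$, which is exactly the Leibniz rule. The step demanding the most care is the coassociativity verification: one must keep the two sources of truncation ($\varepsilon^p = 0$ in the Hopf algebra and $\xi^p = 0$ on $M$) aligned, using that $i!$ for $i < p$ is invertible in $\F_p$ while $p! = 0$; once that is in place, everything else reduces to formal manipulation of exponentials of commuting nilpotent operators.
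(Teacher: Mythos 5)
Your treatment of the first three assertions is correct and essentially the same as the paper's: you decompose a coaction as $\rho=\sum_{i}\rho_{i}\varepsilon^{i}$, extract $\rho_{0}=\id$ and the relations $\rho_{a}\rho_{b}=\binom{a+b}{a}\rho_{a+b}$ (with $\rho_{a}\rho_{b}=0$ when $a+b\ge p$), identify coactions with truncated exponentials of $p$-nilpotent operators, and handle invariants, morphisms and tensor products by the same direct checks the paper leaves implicit. The one genuine flaw is in the algebra statement: the claim that ``$A$-linearity of $\xi$ forces $\xi(1)=0$, so the unit $A\to B$ is automatically equivariant'' is false. Take $B=A=\FF_{p}[b]/(b^{p})$ and $\xi$ equal to multiplication by $b$: this is $A$-linear and $p$-nilpotent, yet $\xi(1)=b\neq0$, so the unit map is not equivariant. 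Equivariance of $A\to B$ is the nontrivial condition $\xi\circ\iota=0$, which the paper records explicitly rather than discarding.

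The biconditional you are proving does survive, but for a different reason: equivariance of the multiplication, which as you correctly say unwinds to the Leibniz rule $\xi(bb')=\xi(b)b'+b\xi(b')$, already gives $\xi(1)=\xi(1\cdot1)=2\xi(1)$, hence $\xi(1)=0$, and then $A$-linearity yields $\xi(\iota(a))=a\xi(1)=0$, i.e.\ equivariance of the unit. So in the equivalence ``both structure maps are equivariant if and only if $\xi$ is an $A$-derivation'' the unit condition is implied by the multiplication condition together with the standing $A$-linearity of $\xi$; it is not vacuous on its own. With that one-line repair your argument is complete and coincides with the paper's proof, which treats the unit map by translating its equivariance into the condition $\xi(\iota(a))=0$ for all $a\in A$.
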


\begin{proof}
Let $M$ be an $A$-module and $\phi\colon M\arr M\otimes k[\varepsilon]$
be an $A$-linear map. The map $\phi$ can be written as 
\[
\phi=\sum_{i=0}^{p-1}\phi_{i}\varepsilon^{i}\text{ for }A\text{-linear maps }\phi_{i}\colon M\arr M
\]
The map $\phi$ must satisfy the following two conditions in order
to be an $\alpha_{p}$-action: $(\id_{M}\otimes z)\circ\phi=\id_{M}\colon M\arr M$,
where $z\colon\F_{p}[\varepsilon]\arr\F_{p}$, $z(\varepsilon)=0$
is the $0$-section and $(\id_{M}\otimes\Delta)\circ\phi=(\phi\otimes\id_{\FF_{p}[\epsilon]})\circ\phi\colon M\arr M\otimes\F_{p}[\varepsilon]\otimes\F_{p}[\varepsilon]$,
where $\Delta\colon\F_{p}[\varepsilon]\arr\F_{p}[\varepsilon]\otimes\F_{p}[\varepsilon]$,
$\Delta(\varepsilon)=\varepsilon\otimes1+1\otimes\varepsilon$ is
the comultiplication. Those conditions translate into
\[
\phi_{0}=\id_{M}\text{ and }\phi_{i}\phi_{j}=\binom{i+j}{i}\phi_{i+j}\text{ for }0\leq i,j<p
\]
and into $\phi_{1}^{p}=0$, $\phi=\exp(\phi_{1}\varepsilon)$. This
easily prove the equivalence in the statement.

The trivial $\alpha_{p}$-action on $A$ corresponds to the nilpotent
endomorphism $A\arrdi 0A$. Thus
\[
M^{\alpha_{p}}=\Hom^{\alpha_{p}}(A,M)=\Hom_{\Mod^{nil}(A)}((A,0),(M,\xi))=\Ker(\xi)
\]
The claim about the tensor product follows from a direct check.

Consider now the last statement. The map $\iota\colon A\arr B$ is
$\alpha_{p}$-equivariant if $\iota$ is compatible with the nilpotent
endomorphisms $A\xlongrightarrow{0}A$ and $\xi$ if and only if $\xi(\iota(a))=0$
for $a\in A$. From the assertion of the tensor product, the map $B\otimes_{A}B\arr B$
is $\alpha_{p}$-equivariant if and only if this map is compatible
with $\xi\colon B\longrightarrow B$ and $\xi\otimes\id_{B}+\id_{B}\otimes\xi\colon B\otimes_{A}B\to B\otimes_{A}B$
if and only if $\xi$ satisfies the Leibniz rule. This ends the proof.
\end{proof}
\begin{example}
Let $(M,\xi)\in\Mod^{nil}A$. Then $\alpha_{p}$ acts on the $A$-algebra
$\Sym(M)$ and the corresponding $p$-nilpotent endomorphism $\xi_{*}\colon\Sym(M)\arr\Sym(M)$
is the unique $A$-derivation such that $(\xi_{*})_{|M}=\xi$.

In particular the corresponding $p$-nilpotent endomorphism $\xi_{n}\colon\Sym^{n}M\arr\Sym^{n}M$
is given by 
\[
\xi_{n}(m_{1}\cdots m_{n})=\xi(m_{1})m_{2}\cdots m_{n}+\cdots+m_{1}\cdots m_{n-1}\xi(m_{n})
\]
\end{example}

\begin{example}
Assume that $A=k$ is a field. Then any $p$-nilpotent endomorphism
$\xi\colon k^{n}\arr k^{n}$ can be put in Jordan form and, in this
case, this just means that all blocks have $0$ diagonal and have
size at most $p$. It follows that, up to isomorphisms, the $\alpha_{p}$-representions
over $k$ correspond bijectively to sequences $\bd=(d_{1},\dots,d_{l})$
with $1\leq d_{i}\leq p$, $d_{i}\geq d_{i+1}$ and $l\in\N$.\bibliographystyle{plain}
\bibliography{ramif-gps}

\begin{thebibliography}{10}

\bibitem{MR1677693}
Victor~V. Batyrev.
\newblock Non-{A}rchimedean integrals and stringy {E}uler numbers of
  log-terminal pairs.
\newblock {\em J. Eur. Math. Soc. (JEMS)}, 1(1):5--33, 1999.

\bibitem{MR2759466}
H.~E. A.~Eddy Campbell and David~L. Wehlau.
\newblock {\em Modular invariant theory}, volume 139 of {\em Encyclopaedia of
  Mathematical Sciences}.
\newblock Springer-Verlag, Berlin, 2011.
\newblock Invariant Theory and Algebraic Transformation Groups, 8.

\bibitem{MR1905024}
Jan Denef and Fran{\c{c}}ois Loeser.
\newblock Motivic integration, quotient singularities and the {M}c{K}ay
  correspondence.
\newblock {\em Compositio Math.}, 131(3):267--290, 2002.

\bibitem{MR579791}
David Harbater.
\newblock Moduli of {$p$}-covers of curves.
\newblock {\em Comm. Algebra}, 8(12):1095--1122, 1980.

\bibitem{MR2995023}
Masayuki Hirokado, Hiroyuki Ito, and Natsuo Saito.
\newblock Three dimensional canonical singularities in codimension two in
  positive characteristic.
\newblock {\em J. Algebra}, 373:207--222, 2013.

\bibitem{Ito-talk}
Hiroyuki {Ito}.
\newblock Talk at {O}saka, 2018.

\bibitem{Tonini:2017qr}
Fabio Tonini and Takehiko Yasuda.
\newblock Moduli of formal torsors.
\newblock arXiv:1709.01705.

\bibitem{Yasuda:2017gd}
Takehiko Yasuda.
\newblock Discrepancies of $p$-cyclic quotient varieties.
\newblock arXiv:1710.06044.

\bibitem{wild-p-adic}
Takehiko Yasuda.
\newblock The wild {M}c{K}ay correspondence and $p$-adic measures.
\newblock arXiv:1412.5260.

\bibitem{MR2271984}
Takehiko Yasuda.
\newblock Motivic integration over {D}eligne-{M}umford stacks.
\newblock {\em Adv. Math.}, 207(2):707--761, 2006.

\bibitem{MR3230848}
Takehiko Yasuda.
\newblock The {$p$}-cyclic {M}c{K}ay correspondence via motivic integration.
\newblock {\em Compos. Math.}, 150(7):1125--1168, 2014.

\end{thebibliography}
\end{example}

\end{document}